\crefname{hypothesis}{hypothesis}{Hypotheses}
\title{Efficient numerical computation of spiral spectra with exponentially-weighted preconditioners\thanks{Submitted to the editors May, 9th 2024.
\funding{Sandstede acknowledges support by the NSF through grants 2038039 and 2106566. Goh acknowledges support by the NSF through grants DMS-2006887, DMS-2307650}}}
\author{
Stephanie Dodson\thanks{Mathematics Department, Colby College, Waterville, USA}
\and
Ryan Goh\thanks{Department of Mathematics and Statistics, Boston University, Boston, USA}
\and
Bj\"orn Sandstede\thanks{Division of Applied Mathematics, Brown University, Providence, USA}
}
\newcommand{\C}{\mathbb{C}}
\newcommand{\R}{\mathbb{R}}
\newcommand{\Z}{\mathbb{Z}}
\def\Re{\mathop\mathrm{Re}\nolimits}    
\newcommand{\rmO}{\mathrm{O}}
\newcommand{\rmd}{\mathrm{d}}
\newcommand{\rme}{\mathrm{e}}
\newcommand{\rmi}{\mathrm{i}}
\newcommand{\Rg}{\mathrm{Rg}}
\newcommand{\spec}{\mathrm{spec}}
\begin{document}

\maketitle

\begin{abstract}
The stability of nonlinear waves on spatially extended domains is commonly probed by computing the spectrum of the linearization of the underlying PDE about the wave profile. It is known that convective transport, whether driven by the nonlinear pattern itself or an underlying fluid flow, can cause exponential growth of the resolvent of the linearization as a function of the domain length. In particular, sparse eigenvalue algorithms may result in inaccurate and spurious spectra in the convective regime. In this work, we focus on spiral waves, which arise in many natural processes and which exhibit convective transport. We prove that exponential weights can serve as effective, inexpensive preconditioners that result in resolvents that are uniformly bounded in the domain size and that stabilize numerical spectral computations. We also show that the optimal exponential rates can be computed reliably from a simpler asymptotic problem posed in one space dimension.
\end{abstract}

\begin{keywords}
numerical spectra, spiral waves, preconditioning
\end{keywords}

\begin{MSCcodes}
35P05, 47A10, 65N25
\end{MSCcodes}


\section{Introduction}\label{s:intro}

Spatiotemporal patterns arise in many natural and physical systems across vast scales. Examples include vegetation patterns in semi-arid environments \cite{Dawes:2016cz,Siteur:2014jm} and mussel beds \cite{Liu}, oscillating chemical reactions \cite{Winfree:1972ud,winfree}, and traveling-wave patterns of electrical activity in neurons and cardiac dynamics \cite{Marcotte1,Karma1,Karma2,Alonso:2016et}. Investigating the formation and stability of these patterns can provide insight into their specific roles and an enhanced understanding of the system. 

Spatiotemporal patterns are commonly studied in reaction-diffusion systems of the form $\mathbf{u}_t = D \Delta \mathbf{u} + f(\mathbf{u})$ where $\mathbf{u} = \mathbf{u}(\mathbf{x},t) \in \mathbb{R}^n$, the smooth nonlinearity $f$ represents local dynamics, and spatial coupling is mediated through the Laplacian. These equations have been studied both on the unbounded domain $\mathbf{x} \in \mathbb{R}^{2}$ and on bounded domains $\mathbf{x} \in [0,L]^2$ of length $L$ coupled with appropriate boundary conditions. Patterns that rotate or travel uniformly in time are stationary in appropriate co-rotating or co-moving frames and can therefore be computed efficiently and accurately through numerical root-finding schemes. To characterize the stability properties of such patterns, it is often informative, and in many cases sufficient, to compute the spectrum of the linearization $\mathcal{L}$ of the model system evaluated at the patterned state.

The numerical computation of the spectrum of $\mathcal{L}$ is not always straightforward though. It is well documented that a differential operator $\mathcal{L}$ posed on the one-dimensional domain $x\in[0,L]$ with $L\gg1$ large can exhibit spurious eigenvalues when the norm of its resolvent $(\mathcal{L}-\lambda)^{-1}$ grows as $L$ increases due to numerical instabilities. This phenomenon was investigated, for instance, for constant-coefficient advection-diffusion operators in \cite{trefethen1,trefethen2} via the notion of pseudospectra \cite{trefethen3}. In \cite{ss-trunc}, the lower bound $\|(\mathcal{L}-\lambda)^{-1}\|_{L^2(0,L)}\geq\rme^{\eta(\lambda)L}$ was established for operators with asymptotically constant coefficients, where the exponential rate $\eta(\lambda)$ was linked explicitly to the spatial eigenvalues $\nu(\lambda)$ of the matrix $A(x;\lambda)$ that arises when rewriting the eigenvalue problem $(\mathcal{L}-\lambda)\mathbf{u}=0$ as a first-order spatial dynamical system $\frac{\rmd}{\rmd x} \mathbf{v} = A (x;\lambda) \mathbf{v}$. The recent numerical computations published in \cite{groot} showed that the use of exponential weights of the form $\rme^{-\eta(\lambda)x}$ stabilizes eigenvalue computations of fluid-flow eigenvalue problems with asymptotically constant coefficients on channel-like domains $\mathbf{x}=(x,\mathbf{y})\in[0,L]\times\Omega$ with $\Omega$ bounded and $L\gg1$ large as suggested in \cite{trefethen1,trefethen2,ss-trunc}.

Of interest to us are spiral-wave patterns. These nonlinear waves arise in applications including oscillating chemical reactions of the Belousov--Zhabotinsky reaction \cite{Belmonte,Jahnke1989,Winfree:1972ud} and in cAMP signaling in cellular slime molds \cite{Newell:1982jr}, and they have also been linked to abnormal cardiac rhythms \cite{Marcotte1,Karma1,Karma2,Alonso:2016et}. Spiral waves have thus been the subject of a host of analytical, numerical, and experimental studies; see, for example, \cite{Perez-Munuzuri1991,Diercks,Biktasheva,Barkleyspectra,BKT,ss-mem, BW} and references therein.

A rigidly-rotating spiral wave has a fixed spatial profile that converges to a periodic wave-train in the far field away from the core and rotates in time with a constant temporal frequency. Hence, spiral waves are stationary in appropriate co-rotating coordinate frames. Their stability on bounded disks $B_R(0)$ can be understood via the spectrum of the linearization $\mathcal{L}_R$. In particular, many instabilities, including transitions to meander and drift, period-doubling bifurcations, and spiral-wave break-up, have been shown to be caused by eigenvalues (see \cite[\S12]{ss-mem} for an overview of these phenomena and further references), and it is therefore important to understand how reliable numerical eigenvalue computations are for $\mathcal{L}_R$.

The computation of eigenvalues of $\mathcal{L}_R$ is challenging for even moderate values of the radius $R$, since  convective transport on the unbounded plane towards the far field manifests itself as growth of the resolvent of the non-normal operator $\mathcal{L}_R$ as $R$ increases. While it is known that, with the exception of a discrete set of eigenvalues, the spectrum of $\mathcal{L}_R$ converges to a collection of algebraic curves, termed the absolute spectrum $\Sigma_\mathrm{abs}$, as the radius $R$ grows \cite{ss-trunc,ss-spst,ss-mem}, computations often paint a very different picture. As the domain radius increases, the spectrum appears to approach a different set of curves, given by the essential spectrum of the unbounded-domain linearization, that is distinct from the theoretically predicted limit. This unexpected eigenvalue behavior is caused by the large resolvent norm. Given the relevance of eigenvalues for spiral instabilities, it is therefore important to be able to extract eigenvalues reliably from spectral computation. In other words, we need to understand when we can, and cannot, trust numerical eigenvalue computations in this context.

In this paper, we demonstrate that the spectra of spiral waves can be computed accurately by using preconditioners that consist of exponential weights of the form $\rme^{\eta(\lambda)|\mathbf{x}|}$. Notably, Theorem~\ref{p:rg} characterizes (1) the nonempty set of $\lambda$ for which the resolvent grows exponentially with the lower bound $\|(\mathcal{L}_R-\lambda)^{-1}\|_{L^2(B_R(0),\R^N)} \geq \rme^{\eta(\lambda)R}$ for some $\eta(\lambda)>0$, and (2) the set of $\lambda$ for which the resolvent is  bounded uniformly in $R$. Theorem~\ref{t:1} shows that the resolvent is bounded uniformly in $R$ with $\|(\rme^{-\eta(\lambda)|\mathbf{x}|} \mathcal{L}_R \rme^{\eta(\lambda)|\mathbf{x}|} - \lambda)^{-1}\|_{L^2(B_R(0),\R^N)} \leq C$ when posed on an appropriate exponentially weighted space. Furthermore, we show how the rates $\eta(\lambda)$ can be calculated accurately and efficiently from the spatial eigenvalues of the asymptotic far-field operator: the resulting exponential weights therefore serve as inexpensive preconditioners.

The paper is outlined as follows. We review the case of convection-diffusion operators in Section~\ref{s:1d} to illustrate the relevant mathematical terminology, techniques, and phenomena. The necessary background on spiral waves, their spectra, and the statements of the main results are presented in Section~\ref{s:mr} and their proofs in Section~\ref{s:proof}. In Section~\ref{s:num}, we demonstrate that the proposed use of exponential weights as preconditioners indeed facilitates the accurate numerical computation of spiral spectra in the Barkley model. We emphasize that, while our main results are stated for spiral waves, the presented numerical algorithm can be deployed also in other applications as demonstrated by the convection-diffusion operator and the work in \cite{groot}.


\section{Review: Convection-diffusion operators}\label{s:1d}

To motivate our results, we illustrate the phenomena of interest using the often studied convection-diffusion operator $\mathcal{L}_Ru := u_{xx} + c u_x$ for positive drift speed $c>0$ on large intervals $x\in (-R/2,R/2)$ of length $R\gg1$ with Dirichlet boundary conditions $u|_{x=\pm R/2} = 0$. The results described here can be found in \cite{ss-trunc,trefethen2}\footnote{We remark that the results in \cite{ss-trunc} apply more generally to differential operators of order $n$ with asymptotically constant coefficients and arbitrary separated boundary conditions.}, and we therefore keep the discussion mostly informal. The spectrum of $\mathcal{L}_R$ is given by $\Sigma_R=\{-\frac{c^2}{4}-\frac{n^2\pi^2}{R^2}\colon n\in \mathbb{N}\}$. As $R\to\infty$, the set $\Sigma_R$ converges locally uniformly to the \emph{absolute spectrum} $\Sigma_\mathrm{abs}=\{\lambda\in\C\colon \lambda\leq -\frac{c^2}{4}\}$ in the symmetric Hausdorff distance. Next, we consider the spectrum of $\mathcal{L}_\infty$ posed on the whole line $\R$, which can be analysed by writing the eigenvalue problem $\mathcal{L}_\infty u = \lambda u$ as the first-order spatial dynamical system
\[
\frac{\rmd}{\rmd x}\begin{pmatrix} u \\ v \end{pmatrix} = A(\lambda) \begin{pmatrix} u \\ v \end{pmatrix}, \qquad
A(\lambda) = \begin{pmatrix} 0 & 1 \\ 0 & -c \end{pmatrix}.
\]
The eigenvalues $\nu(\lambda)$ of $A(\lambda)$, often referred to as \emph{spatial eigenvalues}, satisfy the dispersion relation $\lambda = \nu^2 + c \nu$. We order them by real part, with $\Re \nu_{-1}(\lambda)<0<\Re \nu_0(\lambda)$ for $\lambda>0$, so that
$\nu_{-1}(\lambda) = -\frac{c}{2} -\sqrt{\frac{c^2}{4} + \lambda}$ and
$\nu_{0}(\lambda)  = -\frac{c}{2} +\sqrt{\frac{c^2}{4} + \lambda}$,
and define the spectral gap $J_0(\lambda) = (-\Re \nu_0(\lambda),-\Re \nu_{-1}(\lambda))\subset \R$. The spatial eigenvalues can be used to characterize both the absolute spectrum via
\[
\Sigma_\mathrm{abs}
= \left\{\lambda\in\C\colon \Re\nu_0(\lambda)=\Re\nu_{-1}(\lambda) \right\}
= \left\{\lambda\in\C\colon J_0(\lambda) = \emptyset \right\}
\]
and the Fredholm boundary $\Sigma_\mathrm{FB}$ of $\mathcal{L}_\infty$ posed on $L^2(\R)$ via
\[
\Sigma_\mathrm{FB}
= \left\{\lambda\in\C\colon \nu_0(\lambda)\in\rmi\R \right\}
= \left\{\lambda\in\C\colon \lambda = -\ell^2 + \rmi c \ell, \; \ell\in\R \right\}.
\]
Since the operator has constant coefficients, $\Sigma_{FB}$ is equal to the essential spectrum. Instead of the usual $L^2$ space, we can also pose $\mathcal{L}_\infty$ on the exponentially-weighted function space  $L_\eta^2(\R,\C):=\{ u\in L^2_\mathrm{loc}\colon |u|_{L^2_\eta}:=|u(x)\rme^{\eta x}|_{L^2}<\infty\}$ with $\eta\in\R$, which is equivalent to considering the conjugated operator $\mathcal{L}_{\infty,\eta}:=\rme^{\eta x}\mathcal{L}_\infty\rme^{-\eta x}$ on $L^2(\R)$. The Fredholm boundary $\Sigma_{\mathrm{FB},\eta}$ of $\mathcal{L}_{\infty,\eta}$ is given by
\[
\Sigma_{\mathrm{FB},\eta}
= \left\{\lambda\in\C\colon \nu_0(\lambda)-\eta\in\rmi\R \right\}
= \left\{ \lambda\in\C\colon \lambda = -\ell^2 + \rmi\ell(c- 2\eta)+ \eta^2 - c\eta,\; \ell\in\mathbb{R} \right\}.
\]
In particular, the spectrum is shifted to the left for weights $\eta$ with $0<\eta\leq c/2$.

For $R\gg1$, the works \cite{trefethen2, ss-trunc} show that the resolvent operator $(\mathcal{L}_R-\lambda)^{-1}$ is bounded uniformly in $R\gg1$ for each $\lambda$ to the right of $\Sigma_\mathrm{FB}$, that is, for all $\lambda$ for which $0\in J_0(\lambda)$. In addition, these papers show that the norm of $(\mathcal{L}_R-\lambda)^{-1}$ grows exponentially in $R$ for $\lambda$ to the left of $\Sigma_\mathrm{FB}$.

\begin{proposition}[{\cite[Thms~5 \& 7]{trefethen2}, \cite[Prop~2]{ss-trunc}}] \label{p:cd-r}
Let $\lambda_*\in\C\setminus\Sigma_\mathrm{abs}$ with $0\not\in \overline{J_0(\lambda_*)}$ so that $\Re\nu_{-1}(\lambda_*)<\Re\nu_{0}(\lambda_*)<0$, then there are constants $\delta,C,R_*>0$ so that 
$\|(\mathcal{L}_R-\lambda)^{-1}\|_{L^2(-R/2,R/2)} \geq C \rme^{|\nu_0(\lambda_*)|R}$
uniformly in $R\geq R_*$ for all $\lambda\in B_\delta(\lambda_*)$.
\end{proposition}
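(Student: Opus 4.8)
The plan is to produce, for every $\lambda$ in a small ball around $\lambda_*$, an approximate eigenfunction $u_R$ lying in the domain of $\mathcal{L}_R$ (so it satisfies the Dirichlet conditions exactly) whose $L^2(-R/2,R/2)$ norm stays bounded away from $0$ uniformly in $R$, while $\|(\mathcal{L}_R-\lambda)u_R\|_{L^2}\leq C\rme^{-|\Re\nu_0(\lambda)|R}$. Since $u_R=(\mathcal{L}_R-\lambda)^{-1}(\mathcal{L}_R-\lambda)u_R$ whenever $\lambda$ lies in the resolvent set of $\mathcal{L}_R$, this gives at once $\|(\mathcal{L}_R-\lambda)^{-1}\|_{L^2}\geq\|u_R\|_{L^2}/\|(\mathcal{L}_R-\lambda)u_R\|_{L^2}\geq C'\rme^{|\Re\nu_0(\lambda)|R}$; when $\lambda\in\Sigma_R$ the bound is trivially true. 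I would first shrink $\delta$ so that $B_\delta(\lambda_*)$ stays disjoint from $\Sigma_R\subset(-\infty,-c^2/4]$ for all large $R$, which is possible because $\lambda_*\notin\Sigma_\mathrm{abs}=(-\infty,-c^2/4]$; this makes the resolvent a bona fide bounded operator and guarantees $(\mathcal{L}_R-\lambda)u_R\not\equiv0$ on the ball. The rate $|\Re\nu_0(\lambda)|=-\Re\nu_0(\lambda)$ that this produces equals $|\nu_0(\lambda_*)|$ when $\lambda_*\in\R$ (the case of interest), and for general $\lambda\in B_\delta(\lambda_*)$ it agrees with $|\nu_0(\lambda_*)|$ up to a loss that shrinks with $\delta$ and is absorbed into $C$ and $R_*$, exactly as in \cite{trefethen2,ss-trunc}.

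For the construction I would use the homogeneous solution adapted to the \emph{left} boundary condition. Since $\lambda_*\notin\Sigma_\mathrm{abs}$, the spatial eigenvalues $\nu_0(\lambda),\nu_{-1}(\lambda)$ — the roots of $\nu^2+c\nu=\lambda$ — are simple and analytic near $\lambda_*$, and the strict inequalities $\Re\nu_{-1}(\lambda)<\Re\nu_0(\lambda)<0$ persist on some $B_\delta(\lambda_*)$. I would set $\phi_\lambda(x):=\rme^{\nu_0(\lambda)(x+R/2)}-\rme^{\nu_{-1}(\lambda)(x+R/2)}$, the solution of $(\mathcal{L}_\infty-\lambda)\phi=0$ that vanishes at $x=-R/2$ with $\phi_\lambda'(-R/2)=\nu_0-\nu_{-1}\neq0$. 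Because both spatial eigenvalues have negative real part, $\|\phi_\lambda\|_{L^2(-R/2,R/2)}$ is bounded above by an $R$-independent constant (estimate $\int_0^\infty|\rme^{\nu_0 s}-\rme^{\nu_{-1}s}|^2\,\rmd s$) and below by its mass on a fixed-length interval near the left endpoint, both uniformly for $\lambda\in B_\delta(\lambda_*)$; yet $\phi_\lambda$ misses the right Dirichlet condition by $\phi_\lambda(R/2)=\rme^{\nu_0(\lambda)R}-\rme^{\nu_{-1}(\lambda)R}$, whose modulus is at most $C\rme^{\Re\nu_0(\lambda)R}=C\rme^{-|\Re\nu_0(\lambda)|R}$. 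I would then correct this with a cutoff: fix $\chi\in C^\infty(\R)$ equal to $0$ on $(-\infty,-\tfrac12]$ and to $1$ near $0$, and set $u_R(x):=\phi_\lambda(x)-\phi_\lambda(R/2)\,\chi(x-R/2)$, which vanishes at $\pm R/2$. Then $\|u_R\|_{L^2}\geq\|\phi_\lambda\|_{L^2}-|\phi_\lambda(R/2)|\,\|\chi\|_{L^2}$ is still bounded below for $R$ large, while, using $(\mathcal{L}_\infty-\lambda)\phi_\lambda=0$, the residual $(\mathcal{L}_R-\lambda)u_R=-\phi_\lambda(R/2)\big(\chi''+c\chi'-\lambda\chi\big)(\cdot-R/2)$ is supported in $[R/2-\tfrac12,R/2]$ and hence has $L^2$ norm at most $C(1+|\lambda|)\,|\phi_\lambda(R/2)|\leq C''\rme^{-|\Re\nu_0(\lambda)|R}$, uniformly on $B_\delta(\lambda_*)$. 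Combining the three estimates completes the argument.

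The step I expect to be the real obstacle is precisely the choice of quasimode, since the naive candidates all fail. Truncating the bare exponential $\rme^{\nu_0(\lambda)x}$ near the endpoint where it is exponentially large forces a correction of the same exponential size whose residual is then comparably large, so the quotient $\|u_R\|/\|(\mathcal{L}_R-\lambda)u_R\|$ stays bounded; likewise, starting from the homogeneous solution that satisfies the \emph{right} boundary condition does not help, because it is exponentially large at the left end. What makes $\phi_\lambda$ work is that it satisfies the left condition \emph{and} has $\nu_0$ — the spatial eigenvalue closest to the imaginary axis among the ones decaying at $+\infty$ — as its dominant mode, so it is genuinely of size $\rme^{-|\Re\nu_0|R}$ at the right endpoint and only an exponentially small correction, hence an exponentially small residual, is needed there. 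Once this is identified, the remaining ingredients — simplicity and analyticity of $\nu_{0,-1}$ near $\lambda_*$, and uniformity of all constants over $B_\delta(\lambda_*)$ — are routine; and for the order-$n$ operators with general separated boundary conditions of \cite{ss-trunc} one simply replaces $\phi_\lambda$ by the solution of the left boundary-value problem built from the spatial modes that decay at $+\infty$ and repeats the same argument.
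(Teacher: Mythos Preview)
Your quasimode construction is correct: $\phi_\lambda(x)=\rme^{\nu_0(\lambda)(x+R/2)}-\rme^{\nu_{-1}(\lambda)(x+R/2)}$ satisfies the left Dirichlet condition exactly, has $L^2$ norm bounded above and below uniformly in $R$ (the lower bound coming from a fixed neighbourhood of $x=-R/2$ where $\phi_\lambda'(-R/2)=\nu_0-\nu_{-1}\neq0$), and misses the right condition by $\rmO(\rme^{\Re\nu_0(\lambda)R})$, so the cutoff correction produces a residual of the same order and the resolvent bound follows. This is precisely the pseudomode argument in the cited references \cite{trefethen2,ss-trunc}; the paper itself does not give a proof of this proposition but quotes it from those sources.

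It is worth noting, however, that when the paper proves the analogous statement for spiral waves (Theorem~\ref{p:rg}(i) in \S\ref{s:p7}), it takes the dual route: rather than building an approximate eigenfunction with exponentially small residual, it fixes a compactly supported right-hand side $h$ (an indicator function times the eigenvector $v_{-1}$ of the asymptotic wave-train operator) and shows that the exact solution $w=(\mathcal{L}_R-\lambda)^{-1}h$ grows exponentially along the center direction corresponding to $\nu_{-1}$. Your approach is more elementary and gives the sharp rate $|\Re\nu_0(\lambda_*)|$ directly for the constant-coefficient problem; the paper's approach via exponential dichotomies and center-direction tracking is heavier but is what generalizes to the spiral-wave setting, where the coefficients are only asymptotically periodic and no explicit homogeneous solutions are available. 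One small remark: your claim that $|\Re\nu_0(\lambda)|$ ``equals $|\nu_0(\lambda_*)|$ when $\lambda_*\in\R$'' is the only case in which the rate in the statement is literally attained; for complex $\lambda_*$ the quasimode argument yields the (generally smaller) rate $|\Re\nu_0(\lambda_*)|$, which is in fact the sharp exponent and is what the cited references establish.
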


Furthermore, it was shown in \cite{ss-trunc} (and this can also be inferred from the results in \cite{trefethen2}) that the resolvent stays bounded uniformly in $R$ provided it is posed on $L^2_\eta$ for an appropriate weight $\eta$.

\begin{proposition}[{\cite[Prop~1]{ss-trunc}}] \label{p:cd-eta}
Let $\lambda_*\in\C\setminus\Sigma_\mathrm{abs}$ with $0\not\in\overline{J_0(\lambda_*)}$ and fix $\eta\in J_0(\lambda_*)$, then there are constants $\delta,C,R_*>0$ so that 
$\|(\mathcal{L}_R-\lambda)^{-1}\|_{L^2_\eta(-R/2,R/2)} \leq C$
uniformly in $R\geq R_*$ for all $\lambda\in B_\delta(\lambda_*)$.
\end{proposition}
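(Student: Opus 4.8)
The plan is to pass to the conjugated operator and then construct and estimate its resolvent kernel explicitly. Since $\|(\mathcal{L}_R-\lambda)^{-1}\|_{L^2_\eta(-R/2,R/2)}=\|(\mathcal{L}_{R,\eta}-\lambda)^{-1}\|_{L^2(-R/2,R/2)}$ for $\mathcal{L}_{R,\eta}:=\rme^{\eta x}\mathcal{L}_R\rme^{-\eta x}=\partial_{xx}+(c-2\eta)\partial_x+(\eta^2-c\eta)$ with unchanged Dirichlet data, it suffices to bound the latter uniformly in $R\geq R_*$ and $\lambda\in B_\delta(\lambda_*)$. The characteristic roots $\mu_\pm(\lambda)$ of the constant-coefficient ODE $(\mathcal{L}_{R,\eta}-\lambda)u=0$ are the spatial eigenvalues $\nu_0(\lambda),\nu_{-1}(\lambda)$ of $\mathcal{L}_\infty-\lambda$ shifted by the weight, and the hypothesis $\eta\in J_0(\lambda_*)$ is precisely the condition $\Re\mu_-(\lambda_*)<0<\Re\mu_+(\lambda_*)$; moreover $\mu_+(\lambda_*)\neq\mu_-(\lambda_*)$ because $\lambda_*\notin\Sigma_\mathrm{abs}$. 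Since these are open conditions and $\mu_\pm$ depend analytically on $\lambda$ near $\lambda_*$, after shrinking $\delta$ there are constants $\beta,m>0$ with $\Re\mu_+(\lambda)\geq\beta$, $-\Re\mu_-(\lambda)\geq\beta$, and $|\mu_+(\lambda)-\mu_-(\lambda)|\geq m$ for all $\lambda\in B_\delta(\lambda_*)$.

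Next I would build the Green's function of the boundary-value problem from the homogeneous solutions $\psi_\ell(x)=\rme^{\mu_+(x+R/2)}-\rme^{\mu_-(x+R/2)}$ and $\psi_r(x)=\rme^{\mu_+(x-R/2)}-\rme^{\mu_-(x-R/2)}$ vanishing at the left and right endpoint. Their Wronskian equals $W(x)=2(\mu_+-\mu_-)\sinh\!\big(\tfrac12(\mu_+-\mu_-)R\big)\rme^{(\mu_++\mu_-)x}$, whose modulus is bounded below by $c_0\,m\,\rme^{\beta R}\,\rme^{(\Re\mu_++\Re\mu_-)x}$ for $R\geq R_*$ and a constant $c_0>0$, because $\Re(\mu_+-\mu_-)\geq2\beta>0$. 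In particular $\psi_\ell$ and $\psi_r$ are linearly independent for every $R$ and every $\lambda\in B_\delta(\lambda_*)$, so $\lambda$ lies in the resolvent set of $\mathcal{L}_{R,\eta}$ and the resolvent acts by integration against $G_R(x,y)=-\psi_\ell(\min\{x,y\})\,\psi_r(\max\{x,y\})/W(y)$. Expanding the product $\psi_\ell(\min\{x,y\})\,\psi_r(\max\{x,y\})$ into its four exponential monomials and dividing by $W(y)$, one checks term by term, using only the bounds $-R/2\leq x,y\leq R/2$ to absorb all exponential prefactors, that $|G_R(x,y)|\leq C\,\rme^{-\beta|x-y|}$ uniformly in $R\geq R_*$ and $\lambda\in B_\delta(\lambda_*)$.

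The conclusion then follows from the Schur test (equivalently Young's inequality, since the bound is of convolution type): $\|(\mathcal{L}_{R,\eta}-\lambda)^{-1}\|_{L^2(-R/2,R/2)}\leq\big(\sup_x\int|G_R(x,y)|\,\rmd y\big)^{1/2}\big(\sup_y\int|G_R(x,y)|\,\rmd x\big)^{1/2}\leq C\int_\R\rme^{-\beta|z|}\,\rmd z=2C/\beta$, which is independent of $R$; continuity of the explicit kernel in $\lambda$ shows the same $C$ works throughout $B_\delta(\lambda_*)$.

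The only place the hypotheses genuinely enter, and the main obstacle, is the uniform-in-$R$ lower bound on $|W(y)|$ together with the bookkeeping showing that the boundary terms in $G_R$ actually decay like $\rme^{-\beta|x-y|}$ rather than merely staying bounded; both hinge on the hyperbolicity $\Re\mu_-(\lambda)<0<\Re\mu_+(\lambda)$, i.e.\ on the admissible choice $\eta\in J_0(\lambda_*)$. This is exactly the condition that fails---so that the resolvent grows as in Proposition~\ref{p:cd-r}---when $0\notin\overline{J_0(\lambda_*)}$ and no weight is used. A less computational variant, and the one that generalizes beyond constant coefficients, replaces the explicit kernel by the observation that $\partial_xw=A_\eta(\lambda)w$ admits an exponential dichotomy on $[-R/2,R/2]$ with $R$-independent rates and constants, and assembles $G_R$ from the dichotomy projections plus boundary corrections of size $\rmO(\rme^{-\beta R})$.
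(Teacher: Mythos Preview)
Your proof is correct. The paper does not actually prove this proposition: it is stated in the review section and attributed to \cite[Prop~1]{ss-trunc}, so there is no in-paper argument to compare against directly. Your explicit Green's function construction is the natural elementary route for this constant-coefficient example and is essentially what \cite{trefethen2} computes as well; the key points---that the conjugated characteristic roots satisfy $\Re\mu_-(\lambda)<0<\Re\mu_+(\lambda)$ precisely when $\eta\in J_0(\lambda)$, that this makes the Wronskian grow like $\rme^{\beta R}$ rather than decay, and that the resulting kernel obeys a convolution bound $|G_R(x,y)|\leq C\rme^{-\beta|x-y|}$---are all handled cleanly. By contrast, the reference \cite{ss-trunc} (and the paper's own proof of the spiral-wave analogue, Theorem~\ref{t:1}) proceed via exponential dichotomies, exactly the variant you sketch in your closing paragraph: that machinery is heavier here but is what survives the passage to asymptotically-constant or asymptotically-periodic coefficients where no closed-form Green's function is available. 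One small remark: you never use the hypothesis $0\notin\overline{J_0(\lambda_*)}$, and indeed it is not needed for the conclusion---the bound holds for any $\eta\in J_0(\lambda_*)$; that hypothesis is present in the statement only to isolate the regime in which a nonzero weight is genuinely required (compare Proposition~\ref{p:cd-r}).
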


Thus, while the eigenvalues of $\mathcal{L}_R$ approach the absolute spectrum $\Sigma_\mathrm{abs}$ as $R$ increases, the norm of the resolvent $(\mathcal{L}_R-\lambda)^{-1}$ will grow exponentially in $R$ for $\lambda$ to the left of the Fredholm boundary $\Sigma_\mathrm{FB}$, and in particular near the absolute spectrum $\Sigma_\mathrm{abs}$. From a numerical perspective, the eigenvalue problem is therefore ill-conditioned for $R$ large, and iterative eigenvalue solvers may not be able to locate eigenvalues accurately \cite{trefethen1,trefethen2}. Indeed, as shown in Figure~\ref{f:cd-eigs}, the eigenvalues found by MATLAB's iterative solver \texttt{eigs} for the operator $\mathcal{L}_R$ are inaccurate for all sufficiently large $R$: instead of approaching the theoretical limit $\Sigma_\mathrm{abs}$, the eigenvalues converge to $\Sigma_\mathrm{FB}$. Preconditioning with appropriate exponential weights by computing the eigenvalues of $\mathcal{L}_{R,\eta}=\rme^{\eta x}\mathcal{L}_R\rme^{-\eta x}$ recovers the predicted eigenvalues for weights $\eta\in J_0(\lambda)$.

\begin{figure}
\centering
\includegraphics[trim = 0.05cm 0.05cm 0.0cm 0.0cm,clip,width=0.33\textwidth]{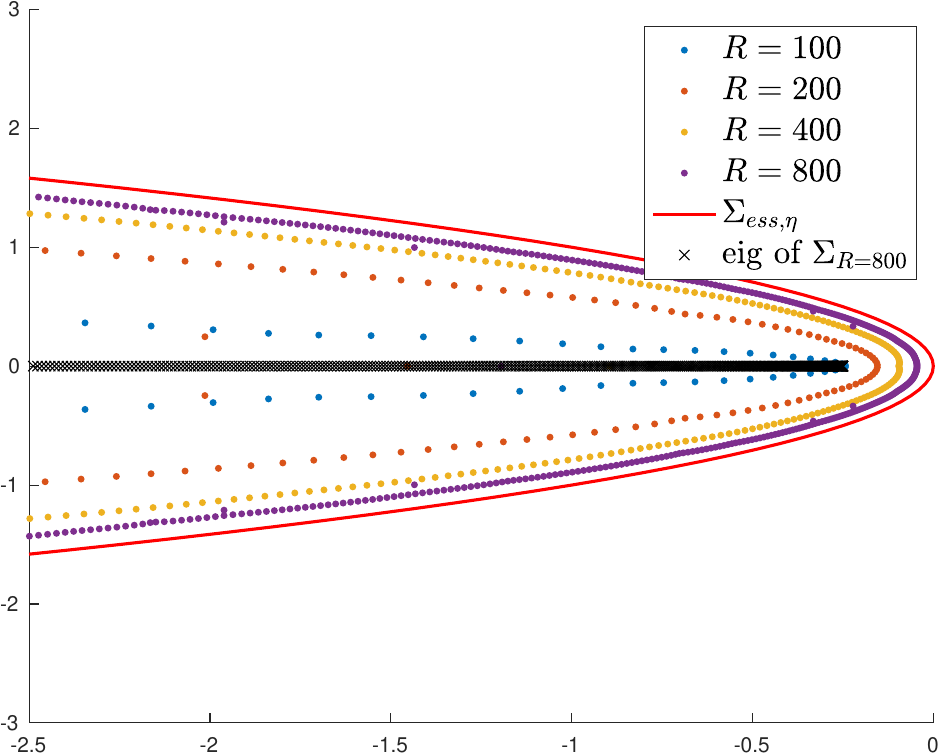}\hspace{-0.05in}
\includegraphics[trim = 0.05cm 0.05cm 0.0cm 0.0cm,clip,width=0.33\textwidth]{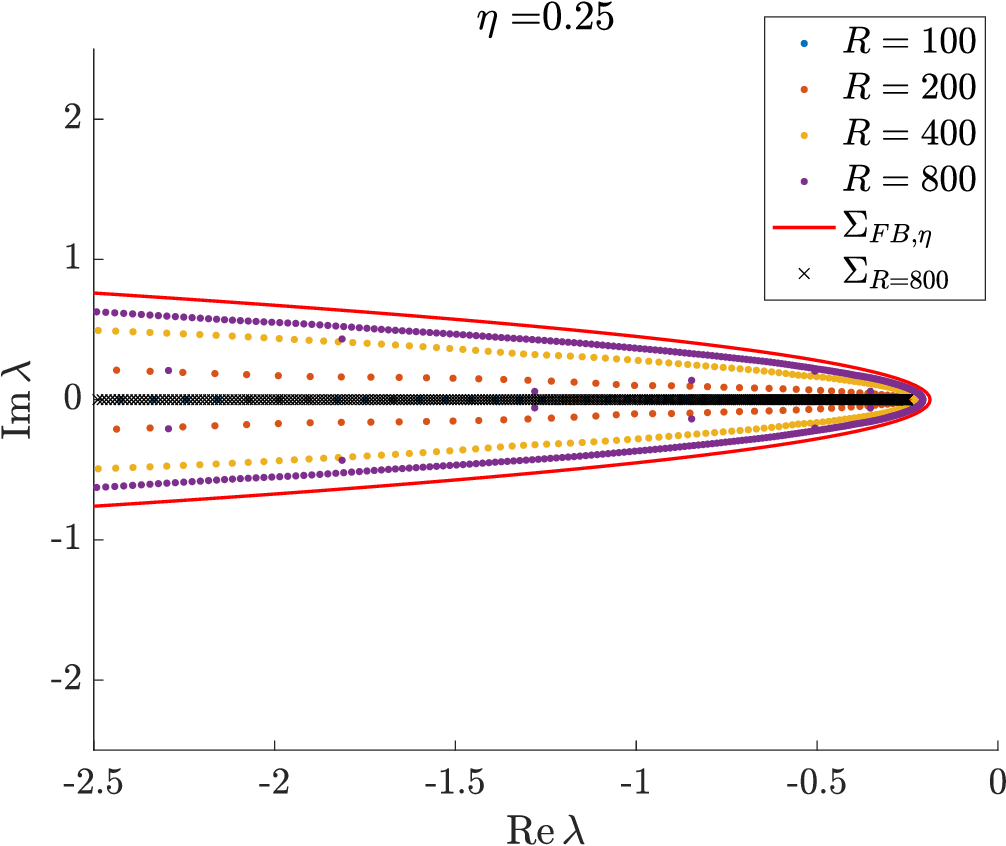}\hspace{-0.05in}
\includegraphics[trim = 0.05cm 0.05cm 0.0cm 0.0cm,clip,width=0.33\textwidth]{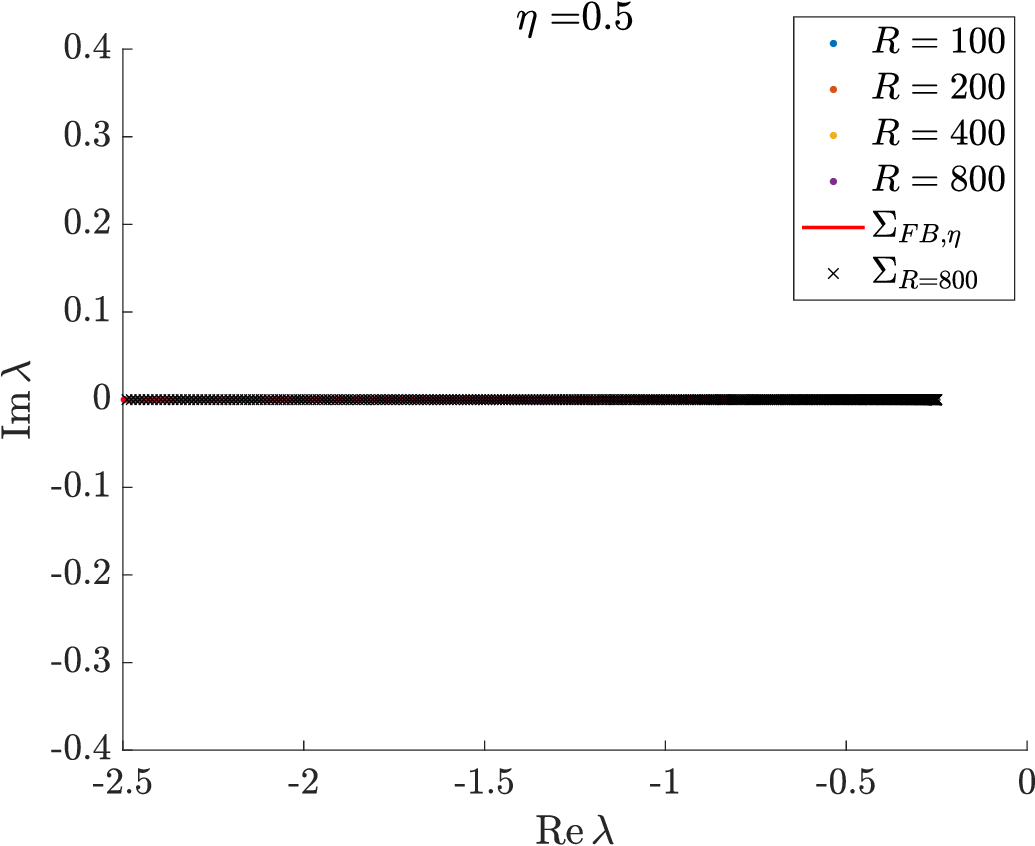}\hspace{-0.05in}
\caption{Shown are the eigenvalues of $\mathcal{L}_{R,\eta}$ with $c=1$ for different values of $R$ with $\eta=0,0.25,0.5$. Eigenvalues not visible lie in $\Sigma_\mathrm{abs}=(-\infty,-c^2/4]$. The numerical spectrum for $R=800$ with $\eta=0.5=c/2$ agrees with the theoretical spectrum within $5\times10^{-3}$ accuracy.}
\label{f:cd-eigs}
\end{figure}

\begin{figure}
\centering
\hspace{-0.5in}\includegraphics[trim = 0.5cm 0.05cm 1.5cm 0.05cm,clip,width=0.35\textwidth]{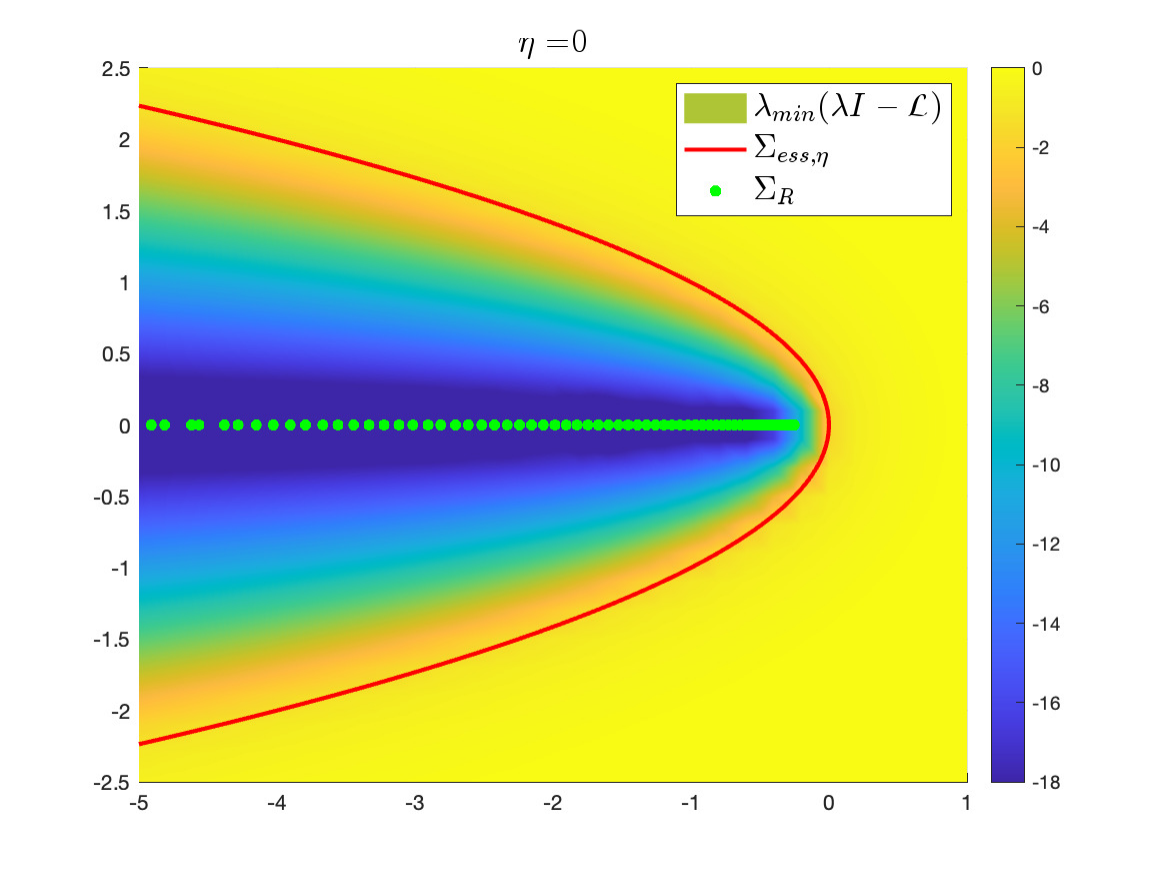}\hspace{-0.05in}
\includegraphics[trim = 0.5cm 0.05cm 1.5cm 0.05cm,clip,width=0.35\textwidth]{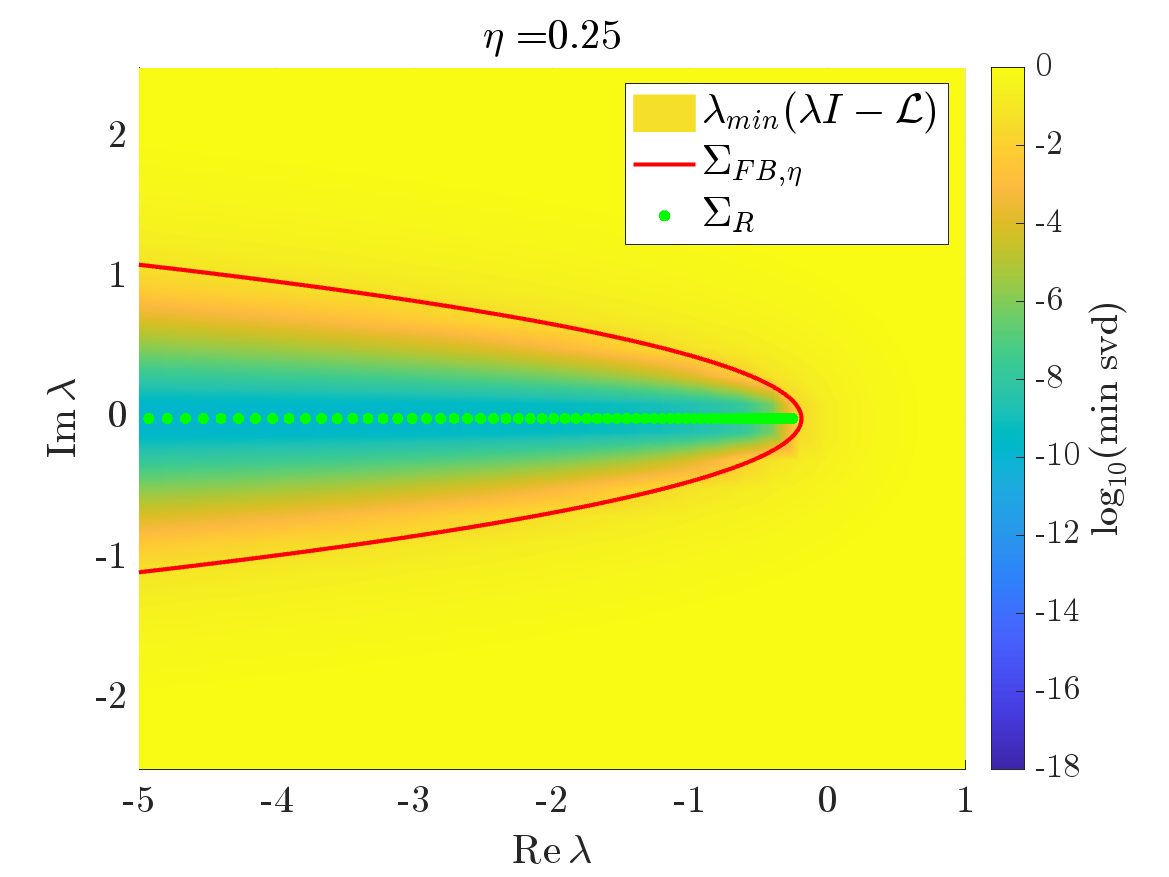}\hspace{-0.05in}
\includegraphics[trim = 0.5cm 0.05cm 0.05cm 0.05cm,clip,width=0.38\textwidth]{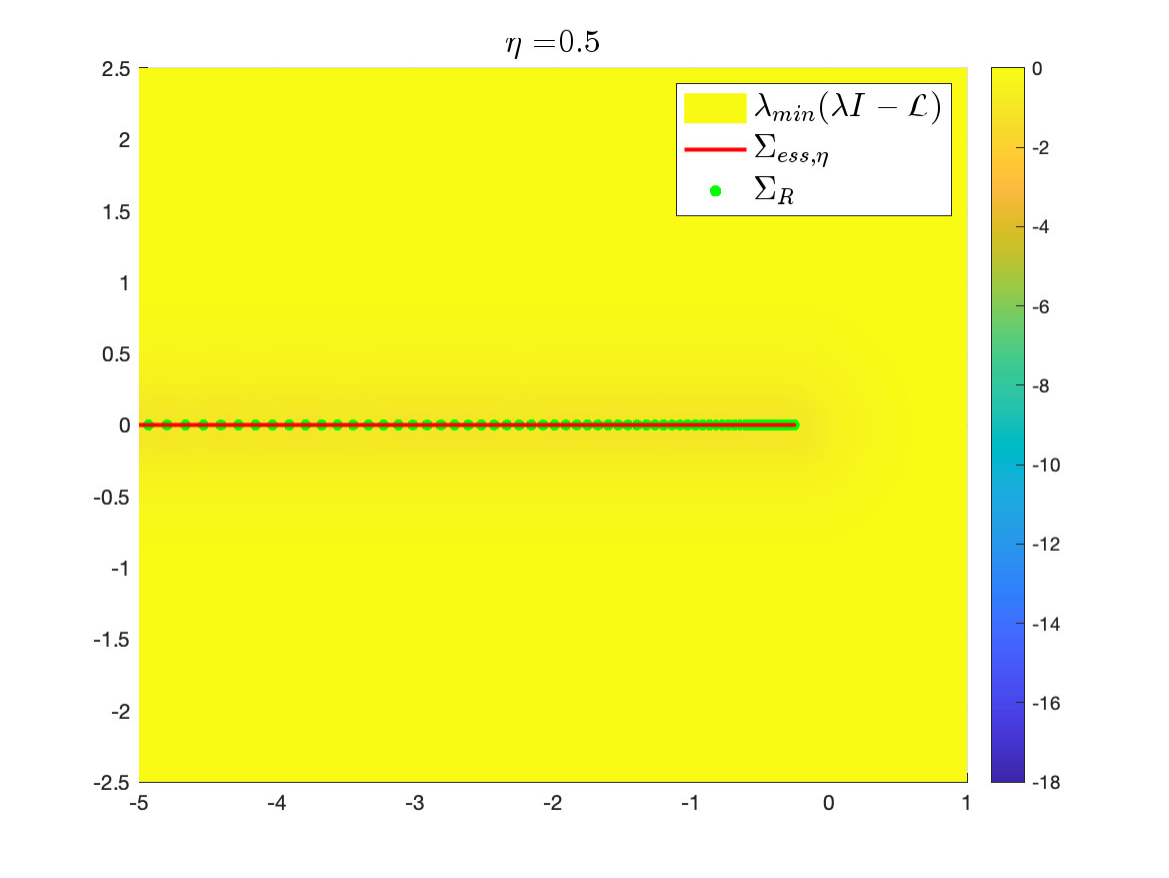}\hspace{-0.3in}
\caption{Shown are the pseudospectrum $\Lambda_\epsilon(\mathcal{L}_R)$, the Fredholm boundary $\Sigma_{\mathrm{FB},\eta}$, and the numerical eigenvalues for a range of weight values $\eta$ with $c=1$. The color scale reflects the minimal singular value of the finite-difference matrix for $\mathcal{L}_R-\lambda$ on a $\log_{10}$ scale and therefore provides the pseudospectrum contours of $\Lambda_\epsilon$. Eigenvalues were found using MATLAB's direct solver \texttt{eig}.}
\label{f:cd-ps}
\end{figure}

We illustrate the growth of the resolvent by computing the $\epsilon$-pseudospectrum, defined by $\Lambda_\epsilon(\mathcal{L}_R):=\{\lambda \colon \|(\mathcal{L}_R-\lambda)^{-1}\|_{L^2(-R/2,R/2)} \geq \epsilon^{-1}\}$ with $\epsilon>0$, numerically via the minimal singular value of the finite-difference approximation of $\mathcal{L}_R$. In Figure~\ref{f:cd-ps}, the boundaries of the pseudospectrum $\Lambda_\epsilon$ are indicated as contour lines for a range of values of $0<\epsilon\ll1$. We note that the $\epsilon$-pseudospectra are not localized around eigenvalues as would be the case for normal operators, and that to the left of $\Sigma_\mathrm{FB,\eta}$ the norm of the resolvent grows exponentially. Using positive weights $\eta>0$ shifts the Fredholm boundary and the pseudospectra $\Lambda_\epsilon(\mathcal{L}_R)$ to the left. The maximal weight value $\eta=c/2$ symmetrizes the conjugated operator $\mathcal{L}_{R,c/2}:=\partial_{xx} - c^2/4$ so that the resolvent is bounded in terms of the inverse of the distance of $\lambda$ to the spectrum of $\mathcal{L}_R$ uniformly in $R$.


\section{Main results}\label{s:mr}

Before stating our results, we summarize the hypotheses we shall need. We  focus first on one-dimensional wave trains, which constitute the asymptotic limits of spiral waves. Consider the reaction-diffusion system 
\begin{equation}\label{e:rd1d}
u_t = D u_{xx} + f(u), \qquad x\in\R, \quad u\in\R^N,
\end{equation}
where $D=\mathrm{diag}(d_j)>0$ is a positive, diagonal diffusion matrix and $f$ is a smooth nonlinearity. We assume that, for some non-zero temporal frequency $\omega_*$ and a certain spatial wavenumber $k_*$, there exists a traveling-wave solution $u(x,t)=u_\mathrm{wt}(k_*x-\omega_*t)$ of (\ref{e:rd1d}), where $u_\mathrm{wt}(\xi)$ is a non-constant $2\pi$-periodic function. The linearization of (\ref{e:rd1d}) about this wave train is given by
$\tilde{u}_t = D \tilde{u}_{xx} + f_u(u_\mathrm{wt}(k_*x-\omega_*t)) \tilde{u}$.
Substituting the Floquet ansatz $\tilde{u}(x,t)=\rme^{\lambda t+\nu x}u(k_*x-\omega_*t)$ into the linearization and using the notation $\phi=k_*x-\omega_*t$, we obtain the spatial eigenvalue problem
\begin{equation}\label{e:awt}
\nu \begin{pmatrix} u \\ v \end{pmatrix} = \mathcal{A}_\mathrm{wt}(\lambda) \begin{pmatrix} u \\ v \end{pmatrix}, \quad
\mathcal{A}_\mathrm{wt}(\lambda) := \begin{pmatrix} -k_*\partial_\phi & 1 \\
-D^{-1}(\omega_*\partial_\phi+f_u(u_\mathrm{wt}(\phi))-\lambda) & -k_*\partial_\phi\end{pmatrix}.
\end{equation}
We consider $\mathcal{A}_\mathrm{wt}(\lambda)$ as a closed operator on $H^{\frac12}(S^1,\C^N)\times L^2(S^1,\C^N)$ with domain $H^{\frac32}(S^1,\C^N)\times H^1(S^1,\C^N)$, where $S^1:=\R/2\pi\Z$. It was shown in \cite[Lemma~2.8]{ss-mem} that the spectrum $\spec(\mathcal{A}_\mathrm{wt}(\lambda))$ of $\mathcal{A}_\mathrm{wt}(\lambda)$ is a countable set $\{\nu_j(\lambda)\}_{j\in\Z}$ of isolated eigenvalues $\nu_j(\lambda)$ with finite multiplicity which, when ordered by increasing real part, satisfy $\Re\nu_j\to\pm\infty$ as $j\to\pm\infty$. We refer to the eigenvalues of $\mathcal{A}_\mathrm{wt}(\lambda)$ as the spatial eigenvalues. We can now formulate our hypotheses on the asymptotic wave trains.

\begin{definition}[Admissible wave trains] \label{H1}
We say that a solution $u(x,t)=u_\mathrm{wt}(k_*x-\omega_*t)$ of (\ref{e:rd1d}) is an admissible wave train if $u_\mathrm{wt}(\phi)$ is smooth and $2\pi$-periodic, the constants $k_*,\omega_*\neq0$ are nonzero, and the associated operator $\mathcal{A}_\mathrm{wt}(\lambda)$ defined in (\ref{e:awt}) satisfies the following:
\begin{compactenum}[(i)]
\item $\nu=0$ is a simple eigenvalue of $\mathcal{A}_\mathrm{wt}(0)$ with eigenfunction $(u_\mathrm{wt}^\prime,k_*u_\mathrm{wt}^{\prime\prime})$.
\item The simple eigenvalue $\nu_*(\lambda)$ of $\mathcal{A}_\mathrm{wt}(\lambda)$ with $\nu_*(0)=0$, which exists by (i), satisfies $\frac{\rmd\nu_*}{\rmd\lambda}(0)<0$.
\item We have $\spec(\mathcal{A}_\mathrm{wt}(0))\cap\rmi\R=\{0\}$.
\item For each $\lambda>0$, we have $\spec(\mathcal{A}_\mathrm{wt}(\lambda))\cap\rmi\R=\emptyset$.
\end{compactenum}
\end{definition}

Definition~\ref{H1}(i) implies that admissible wave trains arise in smooth one-parameter families that are parametrized by their wavenumber $k$ for $k$ near $k_*$ with temporal frequencies given by a smooth nonlinear dispersion relation $\omega=\omega_\mathrm{nl}(k)$. We define the group velocity of an admissible wave train by $c_\mathrm{g}:=\omega_\mathrm{nl}^\prime(k_*)$. We know from \cite[\S2.3]{ss-mem} that $c_\mathrm{g}=-[\frac{\rmd\nu_*}{\rmd\lambda}(0)]^{-1}$, and Definition~\ref{H1}(ii) therefore implies $c_\mathrm{g}>0$.

Recall that we order the spatial eigenvalues $\nu_j(\lambda)$ by increasing real part. We can choose the label of one of the spatial eigenvalues and do so by setting $\nu_{-1}(\lambda):=\nu_*(\lambda)$ for $\lambda$ near the origin: Definition~\ref{H1}(i) and~(iii) show that this choice is unambiguous. As discussed in \cite[\S2.4]{ss-mem}, this labelling can now be continued consistently, though possibly non-uniquely, to each $\lambda\in\C$. Finally, we note that Definition~\ref{H1}(iii)-(iv) implies that $\ldots\leq\Re\nu_{-1}(\lambda)<0<\Re\nu_0(\lambda)\leq\dots$
for all $\lambda>0$, and the spatial eigenvalue $\nu_{-1}(\lambda)$ crosses from left to right through the origin as $\lambda$ decreases through $0$, while we have $\Re\nu_0(0)>0$. The \emph{Fredholm boundary} $\Sigma_\mathrm{FB}$ is the set of $\lambda\in\C$ for which $A_\mathrm{wt}(\lambda)$ is not hyperbolic, and hence defines curves on which $\nu_{-1}(\lambda) \in \rmi \R$. From now on, we will fix the ordering of the spatial eigenvalues we just introduced. We can then define the spatial spectral gap
\[
J_0(\lambda) := \left(-\Re\nu_0(\lambda),-\Re\nu_{-1}(\lambda) \right)\subset \R, \qquad \lambda\in\C
\]
and note that $J_0(0)=(-\Re\nu_0(0),0)\subset\R^-$. We see in a few moments why the spatial spectral gap of wave trains is relevant for spiral waves.

\begin{definition}[Absolute spectrum]
The set $\{\lambda\in\C\colon: J_0(\lambda)=\emptyset\}$, where $\Re\nu_0(\lambda)=\Re\nu_{-1}(\lambda)$, is called the \emph{absolute spectrum} $\Sigma_\mathrm{abs}$ of the wave train $u_\mathrm{wt}$.
\end{definition}

The absolute spectrum consists of semi-algebraic curves, which generically end at branch points or cross in triple junctions \cite{ssr}. It was shown in \cite[Remark~2.4]{ss-mem} that the absolute spectrum is invariant under the vertical shifts $\lambda\mapsto\lambda+\rmi\omega_*\ell$ for $\ell\in\Z$.

Next, we consider the reaction-diffusion system
\begin{equation}\label{e:rd2d}
u_t = D \Delta u  + f(u), \qquad x\in\R^2,\quad u\in\R^N
\end{equation}
and note that, while we will often express functions using polar coordinates $(r,\varphi)$, all operators are defined in terms of the Cartesian coordinates $x\in\R^2$. We now provide a formal definition of planar Archimedean spiral waves and list the non-degeneracy conditions we need to assume for them.

\begin{definition}[Spiral waves]\label{d:sw}
We say that a solution $u(r,\varphi,t)=u_*(r,\varphi-\omega_*t)$ of (\ref{e:rd2d}) is a \emph{spiral wave} if $\omega_*>0$ and there is a wave train $u_\mathrm{wt}(k_*x-\omega_*t)$ of (\ref{e:rd1d}) with nonzero wavenumber $k_*$ and a smooth function $\theta_*(r)$ with $\theta_*^\prime(r)\to0$ as $r\to\infty$ such that $|u_*(r,\cdot-\omega_*t) - u_\mathrm{wt}(k_*r+\theta_*(r)+\cdot-\omega_*t)|_{C^1(S^1)} \to 0$ as $r\to\infty$.
\end{definition}

We linearize (\ref{e:rd2d}) in a co-rotating frame around the spiral wave to obtain the linear operator
\[
\mathcal{L}_* = D\Delta + \omega_* \partial_\varphi + f_u(u_*(r,\varphi)),
\]
which is closed and densely defined on $L^2(\R^2,\C^N)$ and whose domain contains the intersection of $H^2(\R^2,\C^N)$ and $\{u\in L^2(\R^2,\C^N)\colon \partial_\varphi u\in L^2(\R^2,\C^N)\}$; see \cite[\S3.2]{ss-mem} for further details. We will also consider the linearization $\mathcal{L}_*$ on the function spaces
$L^2_\eta(\R^2,\C^N):=\{ u\in L^2_\mathrm{loc}\colon |u|_{L^2_\eta}:=|u(x)\rme^{\eta|x|}|_{L^2}<\infty \}$.
We can now connect the spatial spectral gap $J_0(\lambda)$ with properties of the linearization $\mathcal{L}_*$: as shown in \cite[\S3.2]{ss-mem}, the operator $\mathcal{L}_*-\lambda$ is Fredholm with index zero when considered on the space $L^2_\eta(\R^2,\C^N)$ with weight $\eta\in J_0(\lambda)$. This justifies the following definition of the extended point spectrum of a planar spiral wave.

\begin{definition}[Extended point spectrum of spiral waves]
We say that $\lambda\in\mathbb{C}\setminus\Sigma_\mathrm{abs}$ is in the extended point spectrum $\Sigma_\mathrm{ext}^\mathrm{sp}$ of $\mathcal{L}_*$ if the kernel of $\mathcal{L}_*-\lambda$ is nontrivial in $L^2_\eta(\R^2,\C^N)$ for some $\eta\in J_0(\lambda)$.
\end{definition}

Since $J_0(0)=(-\Re\nu_0(0),0)\subset\R^-$ is not empty and $\mathcal{L}_*\partial_\varphi u_*=0$, we see that $\lambda=0$ lies in the extended point spectrum of the spiral wave. We will focus on transverse spiral waves which satisfy the following conditions.

\begin{definition}[Transverse spiral waves] \label{H2}
We say that a spiral wave $u_*(r,\varphi)$ is transverse if (i) its asymptotic wave train is admissible (see Definition~\ref{H1}) and (ii) for all $\eta<0$ sufficiently small the eigenvalue $\lambda=0$ of the linearization $\mathcal{L}_*$ considered as a closed operator on $L^2_\eta(\R^2,\C^N)$ is algebraically simple.
\end{definition}

This definition is slightly broader that the one given in \cite{ss-mem}, and we emphasize that all results in \cite{ss-mem} continue to hold for spiral waves that satisfy Definition~\ref{H2}.

Our next definition focuses on boundary sinks, which connect an admissible wave train at $x=-\infty$ with a boundary condition at $x=0$. Given an admissible wave train with frequency $\omega_*>0$, we seek solutions $u(x,\tau)=u_\mathrm{bs}(x,\tau)$ of the system
\begin{align}\label{e:bs}
\omega_* u_\tau & = D u_{xx} + f(u), && (x,\tau)\in\R^-\times S^1 \\ \nonumber 
0 & = a u(0,\tau) + b u_x(0,\tau), && \tau\in S^1
\end{align}
with $2\pi$-periodic boundary conditions in $\tau$, where $a,b\in\R$ with $a^2+b^2=1$. For a solution $u_\mathrm{bs}(x,\tau)$ of (\ref{e:bs}), we also consider the associated Floquet linearization
\begin{align}\label{e:bsl}
\omega_* u_\tau & = D u_{xx} + f_u(u_\mathrm{bs}(x,\tau)) u + \lambda u, && (x,\tau)\in\R^-\times S^1 \\ \nonumber
0 & = a u(0,\tau) + b u_x(0,\tau), && \tau\in S^1
\end{align}
on the space $L^2_\eta(\R^-,\C^N)$ with norm $|u|_{L^2_\eta}:=|u(x)\rme^{\eta x}|_{L^2}$. We will consider non-degenerate boundary sinks that satisfy the following conditions.

\begin{definition}[Non-degenerate boundary sinks]\label{H3}
A solution $u(x,\tau)=u_\mathrm{bs}(x,\tau)$ of (\ref{e:bs}) is called a non-degenerate boundary sink if (i) there is an admissible wave-train solution $u_\mathrm{wt}(k_*x-\omega_*t)$ of (\ref{e:rd1d}) so that $|u_\mathrm{bs}(x,\cdot)-u_\mathrm{wt}(k_*x-\cdot)|_{C^1(S^1)}\to0$ as $x\to-\infty$ and (ii) the only solution $u(x,\tau)$ of the linearization (\ref{e:bsl}) with $\lambda=0$ that satisfies $u(x,0)\in L^2_\eta(\R^-,\C^N)$ for each $\eta\in J_0(0)$ is $u=0$.
\end{definition}

We define the extended point spectrum of boundary sinks.

\begin{definition}[Extended point spectrum of boundary sinks]
We say that $\lambda\in\mathbb{C}\setminus\Sigma_\mathrm{abs}$ is in the extended point spectrum $\Sigma_\mathrm{ext}^\mathrm{bs}$ of a non-degenerate boundary sink $u_\mathrm{bs}(x,\tau)$ if (\ref{e:bsl}) has a nontrivial solution $u(x,\tau)$ with $u(x,0)\in L^2_\eta(\R^-,\C^N)$ for some $\eta\in J_0(\lambda)$.
\end{definition}

It was shown in \cite[Theorem~3.19]{ss-mem} that, given numbers $a,b\in\R$ with $a^2+b^2=1$, a transverse spiral wave persists under truncation as a solution $u_R$ of the boundary-value problem
\begin{align}\label{e:disk}
0 = D\Delta u + \omega u_\varphi + f(u) \mbox{ for } |x|<R \quad\mbox{and}\quad
0 = a u + b \frac{\partial u}{\partial n} \mbox{ for } |x|=R
\end{align}
for all large $R\gg1$ provided (\ref{e:bs}) admits a non-degenerate boundary sink belonging to the admissible asymptotic wave train of the planar spiral wave (and we refer to \S\ref{s:p2} for a comparison of temporal frequencies and profiles of the planar spiral $u_*$, the boundary sink $u_\mathrm{bs}$, and the truncated spiral $u_R$). We now turn to our main results. We define
\begin{equation}\label{e:lr}
\mathcal{L}_R := D\Delta + \omega_R \partial_\varphi + f_u(u_R(r,\varphi))
\end{equation}
in Cartesian coordinates as a densely defined operator on $L^2(B_R(0),\C^N)$ with domain
$\mathrm{D}(\mathcal{L}_R):=\{u\in H^2(B_R(0),\C^N)\colon \left(au+b\frac{\partial u}{\partial n}\right)|_{|x|=R}=0\}$.
We also set
$\Sigma_\infty := \Sigma_\mathrm{abs} \cup \Sigma_\mathrm{ext}^\mathrm{sp} \cup \Sigma_\mathrm{ext}^\mathrm{bs}$
and note that it follows from the proof of \cite[Theorem~3.26]{ss-mem} that the spectrum $\spec(\mathcal{L}_R)$ of $\mathcal{L}_R$ on $L^2(B_R(0))$ with domain $\mathrm{D}(\mathcal{L}_R)$ lies in the $\epsilon$-neighborhood $U_\epsilon(\Sigma_\infty)$ of $\Sigma_\infty$ inside each compact subset of $\C$ for all $R\gg1$. In fact, if the extended point spectra of the spiral wave and the boundary sink do not intersect, and the absolute spectrum satisfies additional simplicity and non-resonance conditions, then \cite[Theorem~3.26]{ss-mem} shows that $\spec(\mathcal{L}_R)\to\Sigma_\infty$ in the symmetric Hausdorff distance as $R\to\infty$ uniformly on each fixed compact subset of $\C$. In other words, it is expected that infinitely many eigenvalues converge to $\Sigma_\mathrm{abs}$ as $R\to\infty.$

Analogously to the case of one-dimensional patterns considered in \cite{ss-trunc}, we expect that the norm of the resolvent $(\mathcal{L}_R-\lambda)^{-1}$ grows exponentially in $R$ for each $\lambda\in\C\setminus U_\epsilon(\Sigma_\infty)$ for which $\mathcal{L}_*-\lambda$ has a non-zero Fredholm index (that is, where $\Re\nu_{-1}(\lambda)>0$ or $\Re\nu_0(\lambda)<0$). Our first result affirms this expectation. 

\begin{theorem}\label{p:rg}
Assume that $u_*(r,\varphi)$ is a transverse spiral wave of (\ref{e:rd2d}) with admissible asymptotic wave train $u_\mathrm{wt}(k_*x-\omega_*t)$ and that $u_\mathrm{bs}(x,\tau)$ is a non-degenerate boundary sink of (\ref{e:bs}) belonging to the wave train $u_\mathrm{wt}$. For each $\lambda_*\notin\Sigma_\infty$, there are constants $C_*,R_*,\delta_*,\eta_*>0 $ so that the following is true:
\begin{compactenum}[(i)]
\item Assume $0\notin\overline{J_0(\lambda_*)}$: If $\Re\nu_{-1}(\lambda_*)>0$ with $\Re\nu_{-2}(\lambda_*)<\Re\nu_{-1}(\lambda_*)$, or else $\Re\nu_0(\lambda_*)<0$ with $\Re\nu_0(\lambda_*)<\Re\nu_1(\lambda_*)$, then $\|(\mathcal{L}_R-\lambda)^{-1}\|_{L^2(B_R(0))}\geq C_*\rme^{\eta_*R}$ uniformly in $R\geq R_*$ and $\lambda\in B_{\delta_*}(\lambda_*)$.
\item If $0\in J_0(\lambda_*)$, then $\|(\mathcal{L}_R-\lambda)^{-1}\|_{L^2(B_R(0))}\leq C_*$ uniformly in $R\geq R_*$ and $\lambda\in B_{\delta_*}(\lambda_*)$.
\end{compactenum}
\end{theorem}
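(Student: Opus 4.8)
The plan is to reduce both statements to the one-dimensional wave-train situation and then transplant the convection-diffusion arguments of Propositions~\ref{p:cd-r} and~\ref{p:cd-eta} to the spiral setting, using the far-field asymptotics in Definition~\ref{d:sw} together with the boundary-sink structure in \S\ref{s:p2}. Both parts rely on the exponential dichotomies of the asymptotic spatial dynamical system $\frac{\rmd}{\rmd r}\mathbf{v}=\mathcal{A}_\mathrm{wt}(\lambda)\mathbf{v}$, whose spectrum is the discrete set $\{\nu_j(\lambda)\}$. Since $\lambda_*\notin\Sigma_\infty$, the operator $\mathcal{L}_*-\lambda$ is Fredholm on $L^2_\eta$ for $\eta\in J_0(\lambda)$, and the Fredholm index equals the difference between the number of spatial eigenvalues to the left of $-\eta$ under the two relevant asymptotic half-line problems; away from $U_\epsilon(\Sigma_\infty)$ this index and the spatial-eigenvalue count are locally constant in $\lambda$, which is what lets all constants be chosen uniformly on $B_{\delta_*}(\lambda_*)$.

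For part~(i), suppose $\Re\nu_{-1}(\lambda_*)>0$ (the case $\Re\nu_0(\lambda_*)<0$ is symmetric, exchanging the roles of decay toward the core and toward the boundary). The idea is to build an approximate eigenfunction of $\mathcal{L}_R-\lambda$ that is $\rmO(\rme^{-\eta_* R})$ small in $L^2(B_R(0))$ but has $\rmO(1)$ norm, so that $\|(\mathcal{L}_R-\lambda)^{-1}\|\geq C_*\rme^{\eta_* R}$ by duality/variational characterization of the smallest singular value. Concretely, take the spatial eigenfunction of $\mathcal{A}_\mathrm{wt}(\lambda)$ associated with $\nu_{-1}(\lambda)$, which in the co-rotating frame yields a solution of the asymptotic far-field equation that grows like $\rme^{\Re\nu_{-1}(\lambda)r}$ as $r$ increases; because the true spiral profile converges to the wave train in $C^1$, this extends (by the roughness of exponential dichotomies and a Lyapunov--Schmidt/variation-of-constants correction near the core and near the boundary sink) to an approximate solution $w_R$ of $(\mathcal{L}_R-\lambda)w_R = \rho_R$ with $\|\rho_R\|\lesssim \rme^{(\Re\nu_{-1}(\lambda) - \Re\nu_{-2}(\lambda))\cdot(\text{something})}$; the spectral-gap condition $\Re\nu_{-2}(\lambda_*)<\Re\nu_{-1}(\lambda_*)$ guarantees the next mode decays strictly faster, so the residual is exponentially smaller than $w_R$. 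Normalizing $w_R$ to unit norm and setting $\eta_*$ to be (slightly less than) the relevant gap rate gives the claimed lower bound; the near-boundary modification uses the non-degeneracy of the boundary sink to ensure the boundary condition at $|x|=R$ can be met with exponentially small error.

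For part~(ii), $0\in J_0(\lambda_*)$ means $\mathcal{L}_*-\lambda_*$ is Fredholm of index zero on unweighted $L^2(\R^2)$, and since $\lambda_*\notin\Sigma_\mathrm{ext}^\mathrm{sp}$ it is in fact invertible there; likewise the boundary-sink problem \eqref{e:bsl} is invertible on $L^2(\R^-)$ since $\lambda_*\notin\Sigma_\mathrm{ext}^\mathrm{bs}$. The plan is then a standard far-field/near-field parametrix patching: decompose $B_R(0)$ into a fixed-size core region, an annular far-field region, and a boundary layer of fixed width near $|x|=R$; invert $\mathcal{L}_R-\lambda$ approximately on each piece using $(\mathcal{L}_*-\lambda)^{-1}$, the exponential dichotomy of the wave-train equation on the annulus, and the inverse of the boundary-sink linearization, respectively; glue with a partition of unity. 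The commutator terms from the cutoffs are supported where the local resolvents have uniformly bounded norm and where the dichotomy provides exponential decay that beats the (fixed) overlap widths, so a Neumann series converges and yields $\|(\mathcal{L}_R-\lambda)^{-1}\|_{L^2(B_R(0))}\leq C_*$ uniformly in $R\geq R_*$ and $\lambda\in B_{\delta_*}(\lambda_*)$; this mirrors \cite[\S3]{ss-mem} closely.

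The main obstacle I expect is part~(i): constructing the approximate eigenfunction so that its residual is \emph{exponentially} small rather than merely $\rmo(1)$ requires carefully tracking how the true spiral profile's convergence rate to the wave train interacts with the spatial-eigenvalue gap, and handling the core region (where no small parameter is available and one must solve a genuine boundary-value problem matching the far-field mode) via a Lyapunov--Schmidt reduction against the finitely many slowly-decaying modes — controlling that reduction uniformly in $\lambda\in B_{\delta_*}(\lambda_*)$, and choosing $\eta_*$ below all competing rates, is the delicate bookkeeping step. By contrast, part~(ii) is essentially a routine, if lengthy, parametrix construction once the invertibility of the limiting operators is in hand.
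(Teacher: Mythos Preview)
Your plan is workable but differs from the paper in both parts, and for part~(i) the paper's route sidesteps exactly the obstacles you anticipate.

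For part~(ii) the paper does not build a separate parametrix: it simply notes that $0\in J_0(\lambda_*)$ allows Theorem~\ref{t:1} to be applied with $\eta=0$, so (ii) is an immediate corollary. Your partition-of-unity/Neumann-series argument would in effect re-prove Theorem~\ref{t:1} in the special case $\eta=0$, and by a different technique (the paper proves Theorem~\ref{t:1} via exponential dichotomies and a variation-of-constants representation in the radial variable, not by gluing local inverses).

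For part~(i) the paper does \emph{not} construct a quasimode. It works in the opposite direction: fix $\eta\in J_0(\lambda_*)$ (so $\eta<0$ when $\Re\nu_{-1}>0$), where Theorem~\ref{t:1} guarantees the resolvent on $L^2_\eta$ is uniformly bounded; choose an explicit right-hand side $h(r,\varphi)=\chi_{[R_2-d,R_2]}(r)\,v_{-1}(\varphi)$ supported in a fixed annulus independent of $R$ and built from the $\nu_{-1}$-eigenvector of $\mathcal{A}_\mathrm{wt}$; and solve $(\mathcal{L}_R-\lambda)w=h$ \emph{exactly} via the variation-of-constants and matching machinery already in place from \S\ref{s:p3}--\S\ref{s:p6}. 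The one new ingredient is a refined asymptotic expansion of the stable dichotomy (Lemma~\ref{l:rg1}) showing $\Phi^\mathrm{s}_R(r,s)\mathbf{v}_0\sim (r/s)^{b_0}\rme^{\tilde\nu_{-1}(r-s)}P^\mathrm{c}_R(s)\mathbf{v}_0$ modulo faster-decaying terms; this is where the gap $\Re\nu_{-2}<\Re\nu_{-1}$ actually enters, to isolate the leading stable mode. Tracking the exact solution through this expansion gives $|w(r)|\gtrsim \rme^{(\nu_{-1}-\epsilon)r}$ on $[R_3,R-3\kappa^{-1}\log R]$ in the unweighted norm, hence the exponential lower bound. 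Because the equation is solved exactly in the weighted space, there is no residual to control and the core and boundary matchings are absorbed into the already-established bounds on $(\mathbf{a}^\mathrm{s},\mathbf{a}^\mathrm{u})$; the Lyapunov--Schmidt step and boundary-sink correction you flag as delicate never arise. Your quasimode route could be pushed through, but note two points: the spectral gap does not bound the residual as your heuristic $\|\rho_R\|\lesssim\rme^{(\Re\nu_{-1}-\Re\nu_{-2})(\cdots)}$ suggests---it is needed instead to ensure the constructed solution really grows at rate $\nu_{-1}$ and not slower---and since the spiral converges to the wave train only algebraically in $r$, the bare wave-train eigenfunction is not an adequate approximate solution; you would need essentially the same refined dichotomy expansion as Lemma~\ref{l:rg1} to build a sufficiently accurate $w_R$.
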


Next, consider $\mathcal{L}_R$ with domain $\mathcal{Y}^1_\eta:=\{ u\in H^2_\eta(B_R(0))\colon(au+b\frac{\partial u}{\partial n})|_{|x|=R} = 0\}$ on the space $L^2_\eta(B_R(0))$ where
$|u|_{L^2_\eta(B_R(0))}^2 = \int_{|x|\leq R} |u(x)\rme^{\eta|x|}|^2\,\rmd x$, then the resolvent is bounded uniformly in $R$ on $L^2_\eta(B_R(0))$ for appropriate rates $\eta$.

\begin{theorem}\label{t:1}
Assume that $u_*(r,\varphi)$ is a transverse spiral wave of (\ref{e:rd2d}) with admissible asymptotic wave train $u_\mathrm{wt}(k_*x-\omega_*t)$ and that $u_\mathrm{bs}(x,\tau)$ is a non-degenerate boundary sink of (\ref{e:bs}) belonging to the wave train $u_\mathrm{wt}$, then there exists a $C^0$-function $\eta:\C\setminus\Sigma_\mathrm{abs}\to\R$ with $\eta(\lambda)\in J_0(\lambda)$ for each $\lambda$ so that the following is true. For each compact subset $\Omega$ of $\C$ and each $\epsilon>0$, there are numbers $C_*,R_*>0$ so that 
$\|(\mathcal{L}_R-\lambda)^{-1}\|_{L(L^2_{\eta(\lambda)}(B_R(0)))} \leq C_*$
for all $\lambda\in\Omega\setminus U_\epsilon(\Sigma_\infty)$ and $R>R_*$, where $\mathcal{L}_R$ is the operator defined in \eqref{e:lr} posed on $L^2_{\eta(\lambda)}(B_R(0))$ with domain $\mathcal{Y}^1_{\eta(\lambda)}$.
\end{theorem}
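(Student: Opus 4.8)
The plan is to reduce the statement to the ``$0\in J_0$'' regime of Theorem~\ref{p:rg}(ii), applied to the operator conjugated by the exponential weight, and then to run the far-field/core construction of \cite{ss-mem} with that weight carried through. First I would construct the selection $\eta:\C\setminus\Sigma_\mathrm{abs}\to\R$. On $\C\setminus\Sigma_\mathrm{abs}$ the spatial spectral gap $J_0(\lambda)=(-\Re\nu_0(\lambda),-\Re\nu_{-1}(\lambda))$ is a nonempty convex open interval whose endpoints vary continuously with $\lambda$, and on any compact set of the form $\Omega\setminus U_\epsilon(\Sigma_\infty)\subseteq\C\setminus U_\epsilon(\Sigma_\mathrm{abs})$ its length is bounded below; hence a $C^0$-selection $\eta(\lambda)\in J_0(\lambda)$ exists (for instance the midpoint of $J_0(\lambda)$, patched with a partition of unity if the global continuation of the labels has monodromy) that, on each such compact set, stays a fixed positive distance from the endpoints of $J_0(\lambda)$. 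Bounding $\|(\mathcal{L}_R-\lambda)^{-1}\|_{L(L^2_{\eta(\lambda)}(B_R(0)))}$ is then equivalent to bounding $\|(\mathcal{L}_{R,\eta}-\lambda)^{-1}\|_{L(L^2(B_R(0)))}$, where $\mathcal{L}_{R,\eta}:=\rme^{\rho_{\eta(\lambda)}}\mathcal{L}_R\rme^{-\rho_{\eta(\lambda)}}$ and $\rho_{\eta(\lambda)}$ is a smooth function of $x$ with $\rho_{\eta(\lambda)}(x)=\eta(\lambda)|x|$ for $|x|\geq1$. Smoothing $|x|$ near the origin changes $\mathcal{L}_{R,\eta}$ only by bounded terms in a fixed compact set, and on a bounded disk the weighted and unweighted norms are equivalent so $\mathcal{L}_R$ has the same spectrum on both spaces; in particular, by \cite[Theorem~3.26]{ss-mem} the resolvent already exists for $\lambda\in\Omega\setminus U_\epsilon(\Sigma_\infty)$ and $R$ large, and only the $R$-uniform bound is at issue.

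The crucial observation is that the conjugation adds only lower-order terms that vanish as $r\to\infty$ and shifts the far-field spatial eigenvalues $\nu_j(\lambda)\mapsto\nu_j(\lambda)+\eta(\lambda)$; since $\eta(\lambda)\in J_0(\lambda)$ we obtain $\Re(\nu_0(\lambda)+\eta(\lambda))>0>\Re(\nu_{-1}(\lambda)+\eta(\lambda))$, i.e.\ the $0$-th spectral gap of the conjugated asymptotic operator $\mathcal{A}_\mathrm{wt}(\lambda)+\eta(\lambda)$ now contains $0$. Thus $\mathcal{L}_{R,\eta}-\lambda$ sits in precisely the configuration covered by Theorem~\ref{p:rg}(ii), and Theorem~\ref{t:1} follows by repeating that theorem's proof with $\mathcal{L}_{R,\eta}$ in place of $\mathcal{L}_R$ and $L^2_{\eta(\lambda)}$ in place of $L^2$ in the associated limiting problems.

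Concretely, following the far-field analysis of \cite[\S3]{ss-mem}, one writes $(\mathcal{L}_{R,\eta}-\lambda)u=g$ in polar coordinates as a first-order inhomogeneous system in $r$ on $(0,R]$ whose generator converges to $\mathcal{A}_\mathrm{wt}(\lambda)+\eta(\lambda)$ as $r\to\infty$, and splits $(0,R]$ into a core $(0,r_0]$, a middle annulus $[r_0,R-r_1]$, and a boundary collar $[R-r_1,R]$. On the core, regularity at $r=0$ together with invertibility of the weighted planar-spiral operator $\mathcal{L}_*-\lambda$ on $L^2_{\eta(\lambda)}(\R^2)$ — which is Fredholm of index $0$ since $\lambda\notin\Sigma_\mathrm{abs}$ (see \cite[\S3.2]{ss-mem}) and has trivial kernel since $\lambda\notin\Sigma_\mathrm{ext}^\mathrm{sp}$ — yields a bounded partial solution operator and a complemented space of admissible data at $r=r_0$; symmetrically, invertibility of the weighted boundary-sink linearization (here using $\lambda\notin\Sigma_\mathrm{abs}\cup\Sigma_\mathrm{ext}^\mathrm{bs}$) handles the collar. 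On the middle annulus the shifted generator is hyperbolic because its $0$-th gap contains $0$, so by roughness of exponential dichotomies the equation there has an exponential dichotomy with rate $\alpha>0$ bounded below uniformly over $\Omega\setminus U_\epsilon(\Sigma_\infty)$, and its variation-of-constants solution operator is bounded uniformly in the length of the annulus. Gluing the three pieces reduces solving $(\mathcal{L}_{R,\eta}-\lambda)u=g$ to a finite linear matching system at $r=r_0$ and $r=R-r_1$, which is a perturbation of size $\rmO(\rme^{-\alpha(R-r_0-r_1)})$ of a block-diagonal system whose blocks are invertible exactly because the core- and collar-admissible subspaces are transverse to the middle dichotomy subspaces — the statement $\lambda\notin\Sigma_\mathrm{ext}^\mathrm{sp}\cup\Sigma_\mathrm{ext}^\mathrm{bs}$. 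Hence for $R$ large the matching system is uniformly invertible, assembling gives $\|(\mathcal{L}_{R,\eta}-\lambda)^{-1}\|_{L^2}\leq C_*$ uniformly in $R\geq R_*$, and uniformity in $\lambda\in\Omega\setminus U_\epsilon(\Sigma_\infty)$ follows by compactness (a definite distance from $\Sigma_\infty\supseteq\Sigma_\mathrm{abs}$ keeps $\alpha$, the core/collar resolvent norms, and the transversality angles uniformly controlled) together with continuity of $\eta(\lambda)$.

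I expect the main obstacle to be this last step: upgrading pointwise transversality $\lambda\notin\Sigma_\infty$ to an $R$-uniform bound on the matching system, robust against the perturbations genuinely present — the coefficient errors $f_u(u_R)-f_u(u_*)$ near the core and $f_u(u_R)-f_u(u_\mathrm{bs})$ near the collar (both $\rmo(1)$ as $R\to\infty$ by the persistence result \cite[Theorem~3.19]{ss-mem}), the $\rmO(1/r_0)$ curvature terms of the polar Laplacian, and the $\rmO(\rme^{-\alpha R})$ cross-coupling through the middle annulus. These estimates are essentially those of \cite[\S3]{ss-mem}; what needs checking is only that inserting the weight $\rme^{\rho_{\eta(\lambda)}}$ leaves them uniform in $R$, which it does precisely because $\eta(\lambda)\in J_0(\lambda)$ centers the spatial spectral gap and so eliminates the single spatial direction responsible for the exponential-in-$R$ growth of Theorem~\ref{p:rg}(i). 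An alternative route that avoids the explicit matching system is a contradiction argument: a putative sequence $R_n\to\infty$, $\lambda_n\to\lambda_\infty\in\Omega\setminus U_\epsilon(\Sigma_\infty)$, $\|u_n\|_{L^2}=1$, $\|(\mathcal{L}_{R_n,\eta}-\lambda_n)u_n\|_{L^2}\to0$ would, after localization — the middle dichotomy forces $u_n$ to concentrate near the core or the collar — and elliptic compactness, produce a nonzero kernel element of $\mathcal{L}_*-\lambda_\infty$ in $L^2_{\eta(\lambda_\infty)}(\R^2)$ or of the boundary-sink linearization, contradicting $\lambda_\infty\notin\Sigma_\infty$.
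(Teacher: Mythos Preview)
Your proposal is correct and follows essentially the same strategy as the paper's proof in \S\ref{s:p0}--\S\ref{s:p6}: conjugate by the weight to reduce to unweighted $L^2$, rewrite the resolvent equation as a first-order spatial system in $r$, establish exponential dichotomies on core, far-field, and boundary-layer regions via the machinery of \cite{ss-mem}, and solve the resulting matching system using $\lambda\notin\Sigma_\infty$ for transversality and compactness of $\Lambda_\epsilon$ for uniformity. The paper differs only in minor organizational details---it takes a boundary layer of $R$-dependent width $\kappa^{-1}\log R$ dictated by the closeness estimates \eqref{e:a2}, glues the far-field and boundary-layer dichotomies into a single dichotomy on $[R_1,R]$ before matching (so there is only one matching step at $r=R_1$ plus the boundary condition at $r=R$), and does not use your alternative contradiction argument---and note that your appeal to Theorem~\ref{p:rg}(ii) is mildly circular, since in the paper that statement is deduced \emph{from} Theorem~\ref{t:1} rather than proved independently.
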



\section{Proof of main results}\label{s:proof}

We prove Theorem~\ref{t:1} in \S\ref{s:p0}-\ref{s:p6} and Theorem~\ref{p:rg} in \S\ref{s:p7}.

\subsection{Spatial dynamics}\label{s:p0}

Recall the operator
$\mathcal{L}_R = D\Delta + \omega_R \partial_\varphi + f_u(u_R(r,\varphi))$
on $L^2_\eta(B_R(0))$ with domain 
$\mathcal{Y}^1_\eta$,
where $u_R(r,\varphi)$ denotes the truncated spiral-wave solution of \eqref{e:disk} for $\omega=\omega_R$, whose existence is guaranteed by our assumptions. Choose a compact subset $\Omega\subset\C$ and a constant $\epsilon$ with $0<\epsilon\ll1$, and define the compact set
\begin{equation}\label{e:ceps}
\Lambda_\epsilon := \Omega\setminus U_\epsilon(\Sigma_\infty).
\end{equation}
Pick a continuous function $\eta:\Lambda_\epsilon\to\R$ with $\eta(\lambda)\in J_0(\lambda)$ for all $\lambda\in\Lambda_\epsilon$. In this setting, we want to find constants $C_*,R_*>0$ so that for each $R\geq R_*$, $\lambda\in\Lambda_\epsilon$, and $h\in L^2_{\eta(\lambda)}(B_R(0))$ the equation $(\mathcal{L}_R-\lambda) w = h$ has a unique solution $w\in\mathcal{Y}^1_{\eta(\lambda)}$ and we have $|w|_{L^2_{\eta(\lambda)}}\leq C_*|h|_{L^2_{\eta(\lambda)}}$.
We will reformulate this problem as follows. Given any $\eta\in J_0(\lambda)$ and $h\in L^2_\eta$, we write $g=\rme^{\eta|x|}h$ so that $g\in L^2$ with $|g|_{L^2}=|h|_{L^2_\eta}$. Writing $u=\rme^{\eta|x|}w$, we see that the problem described above is equivalent to finding constants $C_*,R_*$ so that
\begin{equation}\label{e:evp1}
(\rme^{\eta|x|} \mathcal{L}_R \rme^{-\eta|x|}-\lambda) u = g
\end{equation}
has a unique solution $u\in\mathcal{X}^1_\eta:=\{ u\in H^2(B_R(0))\colon \left((a-b\eta)u + bu_r\right)|_{|x|=R} = 0\}$ with $|u|_{L^2}\leq C_*|g|_{L^2}$ for each $R\geq R_*$. Our strategy for proving this claim for \eqref{e:evp1} is to write this equation in polar coordinates as the first-order differential equation
\begin{align}\label{e:evp3}
\begin{pmatrix} u_r \\ v_r \end{pmatrix} =
\begin{pmatrix} \eta & 1 \\
- \frac{\partial_{\varphi\varphi}}{r^2} - D^{-1}[\omega_R\partial_\varphi + f^\prime(u_R) - \lambda] & \eta - \frac{1}{r}
\end{pmatrix}
\begin{pmatrix} u \\ v \end{pmatrix}
+ \begin{pmatrix} 0 \\ D^{-1} g(r,\varphi) \end{pmatrix}
\end{align}
in the spatial evolution variable $r\in(0,R)$, where $(u,v)(r,\cdot)$ lies for each fixed $r$ in the Banach space $X:=H^1(S^1,\C^N)\times L^2(S^1,\C^N)$. The boundary conditions for $u\in\mathcal{X}^1_\eta$ at $|x|=R$ translate into the $\eta$-independent boundary conditions
\[
(u,v)(R,\cdot) \in E^\mathrm{bc} := \left\{ (u,v)\in X\colon au+bv=0 \right\}
\]
for solutions $(u,v)(r,\cdot)$ of \eqref{e:evp3} at $r=R$. We will now discuss \eqref{e:evp3} in different regions of $B_R(0)$. We will rely heavily on the results established in \cite{ss-mem} to which we refer for details and proofs of the facts we quote below.

\subsection{Archimedean coordinates}\label{s:p2}

In \cite{ss-mem}, the planar and truncated spiral waves were constructed as smooth profiles in Archimedean coordinates. As in \cite{ss-mem}, we therefore define $u_*^\mathrm{a}(r,\vartheta):=u_*(r,\vartheta-k_*r-\theta_*(r))$, $u_R^\mathrm{a}(r,\vartheta):=u_R(r,\vartheta-k_Rr-\theta_R(r))$, and $u_\mathrm{bs}^\mathrm{a}(\rho,\vartheta):=u_\mathrm{bs}(\rho,k_*\rho-\vartheta)$ to denote the truncated spiral wave, the planar spiral wave, and the boundary sink in Archimedean coordinates, where $k_R$ and $\theta_R(r)$ are the wave number and phase correction functions associated with the truncated spiral wave. It was shown in \cite[Theorem~3.19 and \S9.2]{ss-mem} that
\begin{eqnarray}\label{e:a2}
\lefteqn{\sup_{0\leq r\leq R-\kappa^{-1}\log R} |u_R^\mathrm{a}(r,\vartheta)-u_*^\mathrm{a}(r,\vartheta)| \rme^{\kappa(R-\kappa^{-1}\log R-r)}} \\ \nonumber &&
+ \sup_{-\kappa^{-1}\log R\leq\rho\leq0} |u_R^\mathrm{a}(R+\rho,\vartheta)-u_\mathrm{bs}^\mathrm{a}(\rho,\vartheta)| \leq \frac{C}{R^\gamma}
\end{eqnarray}
and $|\omega_*-\omega_R| + |k_*-k_R| + \sup_{0\leq r\leq R} r |\theta_*(r)-\theta_R(r)| \leq C\rme^{-\gamma R}$.
Furthermore, we have $u_*^\mathrm{a}(r,\cdot)\to u_\mathrm{wt}(\cdot)$ as $r\to\infty$ and $u_\mathrm{bs}^\mathrm{a}(\rho,\cdot)\to u_\mathrm{wt}(\cdot)$ as $\rho\to-\infty$.

Below, we will also need the spectral projections of the linearization $\mathcal{A}_\mathrm{wt}(\lambda)$ defined in \eqref{e:awt} on $Y:=H^{\frac12}(S^1,\C^N)\times L^2(S^1,\C^N)$. We showed in \S\ref{s:mr} that $\spec(\mathcal{A}_\mathrm{wt}(\lambda))\cap-\eta+\rmi\R=\emptyset$ for each $\eta\in J_0(\lambda)$. We can therefore define the complementary spectral projections $P^\mathrm{s,u}_\mathrm{wt}(\lambda)\in L(Y)$ of $\mathcal{A}_\mathrm{wt}(\lambda)$ associated with the spectral sets $\{\nu\in\spec(\mathcal{A}_\mathrm{wt}(\lambda)):\Re\nu<-\eta\}$ and $\{\nu\in\spec(\mathcal{A}_\mathrm{wt}(\lambda)):\Re\nu>-\eta\}$. Note that these projections do not depend on $\eta$ as long as $\eta$ lies in $J_0(\lambda)$.

\subsection{Far-field region}\label{s:p3}

We first consider the region $r\in[R_1,R-\kappa^{-1}\log R]$ for an appropriate $R_1>0$ and all $R\gg1$. Since the truncated spiral wave $u_R(r,\varphi)$ is close to the planar spiral wave $u_*(r,\varphi)$ in this region by \eqref{e:a2}, we first discuss the linearization around the planar spiral wave. Using the Archimedean coordinates $\vartheta=k_*r+\theta_*(r)+\varphi$ instead of $\varphi$ in \eqref{e:evp3} and setting $g=0$, we arrive at the homogeneous spatial dynamical system
\begin{align}\label{e:sw1}
\mathbf{u}_r = \mathcal{A}^\eta_*(r,\lambda) \mathbf{u}, \qquad \mathbf{u} = (u,v)
\end{align}
with
\begin{equation}\label{e:sw2}
\mathcal{A}^\eta_*(r,\lambda) = \begin{pmatrix}
\eta - (k_*+\theta_*^\prime(r))\partial_\vartheta & 1 \\
- \frac{\partial_{\vartheta\vartheta}}{r^2} - D^{-1}[\omega_*\partial_\vartheta + f_u(u_*^\mathrm{a}(r,\vartheta)) - \lambda]
& \eta - (k_*+\theta_*^\prime(r))\partial_\vartheta - \frac{1}{r} \end{pmatrix}.
\end{equation}
For each fixed $r>0$, the operator $\mathcal{A}^\eta_*(r,\lambda)$ is closed on the Banach space $X:=H^1(S^1,\C^N)\times L^2(S^1,\C^N)$ with dense domain $X^1:=H^2(S^1,\C^N)\times H^1(S^1,\C^N)$. We equip $X$ with the $r$-dependent norm
$|\mathbf{u}(r)|_{X_r}^2 := \frac{1}{r^2} |u|_{H^1}^2 + |u|_{H^{1/2}}^2 + |v|_{L^2}^2$
and write $X_r$ whenever the $r$-dependence of the norm is important. In \cite[Lemma~5.4]{ss-mem}, we constructed linear isomorphisms $\mathcal{I}(r):X_r\to Y$ with $\|\mathcal{I}(r)\|_{L(X_r,Y)}\leq C$ uniformly in $r\geq1$ that allowed us to transfer the spectral projections $P^\mathrm{s,u}_\mathrm{wt}(\lambda)$ defined in \S\ref{s:p2} on the space $Y$ to the $r$-dependent projections $P^\mathrm{s,u}_\mathrm{wt}(r;\lambda):=\mathcal{I}(r)P^\mathrm{s,u}_\mathrm{wt}(\lambda)\mathcal{I}(r)\in L(X_r)$ on $X_r$.
We can now discuss the solvability of the equation
\begin{equation}\label{e:asd}
\mathbf{u}_r = \mathcal{A}(r)\mathbf{u}, \qquad \mathbf{u}\in X_r,
\end{equation}
where $\mathcal{A}(r)$ is of the form \eqref{e:sw2} with $(u_*,\omega_*)$ possibly replaced by other profiles and temporal frequencies. The key notion is exponential dichotomies:

\begin{definition}[Exponential dichotomy]
We say that \eqref{e:asd} has an exponential dichotomy with constant $K$ and rate $\alpha>0$ on an interval $J\subset\R^+$ if there are linear operators $\Phi^\mathrm{s}(r,\rho)$, defined for $r\geq\rho$ in $J$, and $\Phi^\mathrm{u}(r,\rho)$, defined for $r\leq\rho$ in $J$, so that the following is true. For all $r\geq\rho$ in $J$, we have
$\|\Phi^\mathrm{s}(r,\rho)\|_{L(X_\rho,X_r)} + \|\Phi^\mathrm{u}(\rho,r)\|_{L(X_r,X_\rho)} \leq K\rme^{-\alpha|r-\rho|}$.
For each $\mathbf{u}_0\in X_\rho$, the functions $\mathbf{u}(r)=\Phi^\mathrm{s}(r,\rho)\mathbf{u}_0$ and $\mathbf{u}(r)=\Phi^\mathrm{u}(r,\rho)\mathbf{u}_0$ satisfy \eqref{e:asd} for $r\geq\rho$ and $r\leq\rho$, respectively, in $J$. The operators $P^\mathrm{s}(r):=\Phi^\mathrm{s}(r,r)$ and $P^\mathrm{u}(r):=\Phi^\mathrm{u}(r,r)$ are complementary projections on $X_r$, which are strongly continuous in $r$, and we have $\Rg(\Phi^\mathrm{s}(r,\rho))=\Rg(P^\mathrm{s}(r))$ for $r\geq\rho$ in $J$ and $\Rg(\Phi^\mathrm{u}(r,\rho))=\Rg(P^\mathrm{u}(r))$ for $r\leq\rho$ in $J$.
\end{definition}

As shown in \cite{pss}, exponential dichotomies persist under small perturbations.

\begin{lemma}[Robustness]\label{l:edr}
Assume that \eqref{e:asd} has an exponential dichotomy with constant $K$ and rate $\alpha>0$ on the interval $J\subset\R^+$. For each $\delta_0>0$ and $\alpha_0\in(0,\alpha)$, there are constants $K_0,\delta_1>0$ so that the perturbed system $\mathbf{u}_r = \mathcal{A}(r) \mathbf{u} + \mathcal{B}(r) \mathbf{u}$
with $\|\mathcal{B}(r)\|_{\mathrm{L}(X_r)}\leq\delta_1$ for $r\in J$ has an exponential dichotomy on $J$ with constant $K_0$ and rate $\alpha_0$, and the associated projections are $\delta_0$-close to the projections for (\ref{e:asd}) uniformly in $r\in J$.
\end{lemma}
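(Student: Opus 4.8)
The plan is to reduce the statement to the standard robustness theory for exponential dichotomies as developed in \cite{pss}, working on the scale of Banach spaces $X_r$ with their $r$-dependent norms. The starting point is the variation-of-constants formulation: if $\Phi^\mathrm{s},\Phi^\mathrm{u}$ denote the dichotomy for the unperturbed equation \eqref{e:asd}, then a bounded solution $\mathbf{u}(r)$ of the perturbed equation $\mathbf{u}_r=(\mathcal{A}(r)+\mathcal{B}(r))\mathbf{u}$ on $J$ must satisfy the fixed-point equation
\[
\mathbf{u}(r) = \Phi^\mathrm{s}(r,\rho_0)\mathbf{u}_0^\mathrm{s} + \int_{\rho_0}^r \Phi^\mathrm{s}(r,s)\mathcal{B}(s)\mathbf{u}(s)\,\rmd s - \int_r^{\rho_1}\Phi^\mathrm{u}(r,s)\mathcal{B}(s)\mathbf{u}(s)\,\rmd s,
\]
where $\rho_0,\rho_1$ are the endpoints of $J$ (with the obvious modifications if $J$ is a half-line). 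First I would verify that the integral operator on the right is a contraction on the space of exponentially weighted bounded functions $\{\mathbf{u}:J\to X_\cdot\ :\ \sup_{r}\rme^{\alpha_0|r-\rho_0|}|\mathbf{u}(r)|_{X_r}<\infty\}$ once $\delta_1$ is chosen small enough relative to $K$, $\alpha$, and $\alpha-\alpha_0$; the bound $\|\Phi^\mathrm{s}(r,s)\|_{L(X_s,X_r)}+\|\Phi^\mathrm{u}(s,r)\|_{L(X_r,X_s)}\le K\rme^{-\alpha|r-s|}$ together with $\|\mathcal{B}(s)\|_{L(X_s)}\le\delta_1$ gives the standard geometric-series estimate yielding a Lipschitz constant $\le 2K\delta_1/(\alpha-\alpha_0)<1$.

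Next I would construct the perturbed dichotomy from this fixed point. For each $r_0\in J$ and each initial datum in $\Rg(P^\mathrm{s}(r_0))$, solving the fixed-point equation on $\{r\ge r_0\}\cap J$ with the unstable part of the initial condition set to zero produces the stable solution operator $\tilde\Phi^\mathrm{s}(r,r_0)$; symmetrically one obtains $\tilde\Phi^\mathrm{u}$. One then checks: (a) these operators solve the perturbed equation and satisfy the exponential bound with rate $\alpha_0$ and a new constant $K_0=K_0(K,\alpha,\alpha_0,\delta_1)$; (b) the operators $\tilde P^\mathrm{s}(r):=\tilde\Phi^\mathrm{s}(r,r)$ and $\tilde P^\mathrm{u}(r):=\tilde\Phi^\mathrm{u}(r,r)$ are complementary projections — complementarity follows because any $\mathbf{u}_0\in X_r$ decomposes uniquely as (bounded-forward solution through $\mathbf{u}_0$) plus (bounded-backward solution through $\mathbf{u}_0$), the two contributions being $\tilde P^\mathrm{s}(r)\mathbf{u}_0$ and $\tilde P^\mathrm{u}(r)\mathbf{u}_0$; (c) strong continuity in $r$ is inherited from the strong continuity of $\Phi^\mathrm{s,u}$ and continuity of the fixed point in parameters; and (d) the range identities $\Rg(\tilde\Phi^\mathrm{s}(r,\rho))=\Rg(\tilde P^\mathrm{s}(r))$ hold by construction. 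The closeness of projections, $\|\tilde P^\mathrm{s,u}(r)-P^\mathrm{s,u}(r)\|_{L(X_r)}\le\delta_0$, is obtained by estimating the fixed point against the unperturbed solution: the difference is again controlled by $2K\delta_1/(\alpha-\alpha_0)$ times $K$, so shrinking $\delta_1$ forces this below any prescribed $\delta_0$.

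The one genuinely nonroutine point — and the reason a citation to \cite{pss} rather than to a textbook is appropriate — is that the norms $|\cdot|_{X_r}$ depend on $r$, so one is not working with a fixed Banach space but with a continuous family; in particular the operators $\mathcal{A}(r)$ are only densely defined and generate the evolution in a non-autonomous, non-semigroup sense. The main obstacle is therefore to ensure that all constants ($K_0$, the contraction constant, the closeness bound) are uniform in $r\in J$ and, crucially, \emph{independent of the length of $J$} — this is what the statement demands and what makes the lemma usable when $J$ sweeps out intervals of length $\sim R$ as $R\to\infty$. Uniformity is exactly the content of the hypothesis that the unperturbed dichotomy constant $K$ and rate $\alpha$ are fixed on all of $J$, together with $\|\mathcal{I}(r)\|_{L(X_r,Y)}\le C$ uniformly in $r\ge1$ from \cite[Lemma~5.4]{ss-mem} (used implicitly wherever one needs to compare norms on different fibers); once these are in hand, every estimate above is length-independent, and the lemma follows. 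I would simply cite \cite{pss} for the verification of these fiber-dependent estimates and record the conclusion.
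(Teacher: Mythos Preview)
Your proposal is correct and matches the paper's approach: the paper does not give an independent proof of this lemma at all but simply records it as a consequence of \cite{pss}, and the variation-of-constants/contraction argument you sketch is precisely the method used there to establish robustness in the setting of $r$-dependent norms. One small remark: you do not actually need the isomorphisms $\mathcal{I}(r)$ to compare fibers, since the dichotomy bounds are already stated as operator norms $L(X_s,X_r)$ between different fibers and all estimates in the fixed-point argument stay in that form.
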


It was shown in \cite[\S5.2 and \S5.5]{ss-mem} that for each $\lambda\in\Lambda_\epsilon$ and $\eta\in J_0(\lambda)$ there is an $R_1>0$ so that \eqref{e:sw1} has an exponential dichotomy with constant $K$ and rate $\alpha>0$ on $[R_1,\infty)$, and we denote the associated projections by $P^\mathrm{s,u}_*(r;\lambda,\eta)$. It follows from \cite[Proposition~5.5]{ss-mem} that $\|P^\mathrm{s}_\mathrm{wt}(r;\lambda)-P^\mathrm{s}_*(r;\lambda,\eta)\|_{L(X_r)}\to0$ as $r\to\infty$. Any positive number $\alpha$ that satisfies $\eta\pm\alpha\in J_0(\lambda)$ can be chosen as the rate of the exponential dichotomy.
Next, we consider the homogeneous system
\begin{align}\label{e:tsw1}
\mathbf{u}_r = \mathcal{A}^\eta_R(r,\lambda) \mathbf{u}
\end{align}
associated with the truncated spiral wave $u_R$ on $X_r$, where
\begin{equation}\label{e:tsw2}
\mathcal{A}^\eta_R(r,\lambda) = \begin{pmatrix}
\eta - (k_R+\theta_R^\prime(r))\partial_\vartheta & 1 \\
- \frac{\partial_{\vartheta\vartheta}}{r^2} - D^{-1}[\omega_R\partial_\vartheta + f_u(u_R^\mathrm{a}(r)) - \lambda]
& \eta - (k_R+\theta_R^\prime(r))\partial_\vartheta - \frac{1}{r} \end{pmatrix}.
\end{equation}
The estimate \eqref{e:a2} shows that $u_R^\mathrm{a}$ is $1/R^\gamma$-close to the planar spiral wave $u_*^\mathrm{a}$ in the region we consider here, and we conclude that for each $\lambda\in\Lambda_\epsilon$ and $\eta\in J_0(\lambda)$ there are constants $\delta,C>0$ so that
\[\textstyle
\sup_{r\in[R_1,R-\kappa^{-1}\log R]} \left\|\mathcal{A}^\eta_*(r,\lambda) - \mathcal{A}^{\tilde{\eta}}_R(r,\tilde{\lambda})\right\|_{L(X_r)} \leq C \left( \frac{1}{R^\gamma} + |\lambda-\tilde{\lambda}| + |\eta-\tilde{\eta}| \right)
\]
uniformly in $R$ for all $\lambda\in U_\delta(\lambda)$ and $\eta\in U_\delta(\eta)$, where $\gamma$ has been defined in \eqref{e:a2}. Extending $\mathcal{A}^\eta_R(r,\lambda)$ from $[R_1,R-\kappa^{-1}\log R]$ to $[R_1,\infty)$ by freezing its coefficients at their value at $r=R-\kappa^{-1}\log R$ and applying Lemma~\ref{l:edr} gives the following result.

\begin{lemma}[Far-field dichotomies]\label{l:ff}
For each $\delta_0>0$, $\lambda_0\in\Lambda_\epsilon$, and $\eta_0\in J_0(\lambda_0)$ there exist positive constants $\alpha,\delta,K,R_1,R_2$ so that the following is true. Equation \eqref{e:tsw1} has an exponential dichotomy with constant $K$ and rate $\alpha$ on $J=[R_1,R-\kappa^{-1}\log R]$ uniformly in $\lambda\in U_\delta(\lambda_0)$, $\eta\in U_\delta(\eta_0)$, and $R\geq R_2$, and the associated projections $P^\mathrm{s}_R(r;\lambda,\eta)$ satisfy
$\sup_{r\in[R_1,R-\kappa^{-1}\log R]} \|P^\mathrm{s}_R(r;\lambda,\eta) - P^\mathrm{s}_*(r;\lambda_0,\eta_0)\|_{X_r} \leq \delta_0$.
\end{lemma}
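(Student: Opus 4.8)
The plan is to combine the exponential dichotomy for the planar-spiral system \eqref{e:sw1} on $[R_1,\infty)$, already established in \cite{ss-mem}, with the robustness result Lemma~\ref{l:edr}, using the quantitative closeness estimate \eqref{e:a2} between the truncated and planar spirals to control the perturbation. First I would fix $\delta_0>0$, $\lambda_0\in\Lambda_\epsilon$, and $\eta_0\in J_0(\lambda_0)$. Since $J_0(\lambda_0)$ is open, I can pick $\alpha>0$ and a small $\delta>0$ so that $\eta\pm\alpha\in J_0(\lambda)$ for all $\lambda\in U_\delta(\lambda_0)$ and $\eta\in U_\delta(\eta_0)$; by \cite[\S5.2, \S5.5]{ss-mem} the planar system \eqref{e:sw1} with parameters $(\lambda_0,\eta_0)$ then has an exponential dichotomy on $[R_1,\infty)$ with some constant $K$ and rate $\alpha$, with projections $P^\mathrm{s,u}_*(r;\lambda_0,\eta_0)$. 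The quoted closeness estimate gives
\[
\textstyle
\sup_{r\in[R_1,R-\kappa^{-1}\log R]}\bigl\|\mathcal{A}^{\eta_0}_*(r,\lambda_0) - \mathcal{A}^{\eta}_R(r,\lambda)\bigr\|_{L(X_r)} \leq C\Bigl(\tfrac{1}{R^\gamma} + |\lambda-\lambda_0| + |\eta-\eta_0|\Bigr).
\]

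The second step is to pass from the finite interval $[R_1,R-\kappa^{-1}\log R]$ to the half-line $[R_1,\infty)$ so that Lemma~\ref{l:edr} applies: I extend $\mathcal{A}^\eta_R(r,\lambda)$ to all $r\geq R_1$ by freezing its coefficients at their $r=R-\kappa^{-1}\log R$ values, as indicated in the text just above the lemma. On the extended interval the difference $\mathcal{A}^{\eta_0}_*(\cdot,\lambda_0)-\mathcal{A}^{\eta}_R(\cdot,\lambda)$ is still small: on $[R_1,R-\kappa^{-1}\log R]$ by the displayed estimate, and on $[R-\kappa^{-1}\log R,\infty)$ because both $\mathcal{A}^{\eta_0}_*$ (which converges to the constant-coefficient wave-train operator $\mathcal{A}_\mathrm{wt}$ as $r\to\infty$, with $1/r$-type corrections) and the frozen operator are within $\mathrm{O}(1/(R-\kappa^{-1}\log R))$ of that same limit; one checks the difference is bounded by $C(R^{-\gamma}+|\lambda-\lambda_0|+|\eta-\eta_0|+ (\log R/R))$, which tends to $0$ as $R\to\infty$ and $(\lambda,\eta)\to(\lambda_0,\eta_0)$. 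Choosing $R_2$ large and $\delta$ small makes this global perturbation bound smaller than the threshold $\delta_1$ from Lemma~\ref{l:edr} corresponding to the prescribed projection tolerance $\delta_0/2$ and any rate $\alpha_0\in(0,\alpha)$ (relabel $\alpha_0$ as $\alpha$).

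The third step is the conclusion: Lemma~\ref{l:edr} yields an exponential dichotomy for the extended \eqref{e:tsw1} on $[R_1,\infty)$, hence in particular on $J=[R_1,R-\kappa^{-1}\log R]$, with a uniform constant $K$ (renamed), rate $\alpha$, and projections $P^\mathrm{s}_R(r;\lambda,\eta)$ that are $\delta_0/2$-close to $P^\mathrm{s}_*(r;\lambda_0,\eta_0)$ uniformly in $r$, $\lambda\in U_\delta(\lambda_0)$, $\eta\in U_\delta(\eta_0)$, and $R\geq R_2$. Since the restriction of a dichotomy to a subinterval is again a dichotomy with the same constants, this gives exactly the stated bounds on $J$. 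If finer control of the projection on $J$ only (rather than the artificial extension) is wanted, one invokes in addition \cite[Proposition~5.5]{ss-mem}, by which $\|P^\mathrm{s}_\mathrm{wt}(r;\lambda_0)-P^\mathrm{s}_*(r;\lambda_0,\eta_0)\|_{L(X_r)}\to0$, to identify the limiting projection.

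I expect the main technical obstacle to be the uniformity in $R$ of the perturbation bound across the \emph{entire} extended half-line — specifically, verifying that the frozen-coefficient tail of $\mathcal{A}^\eta_R$ stays uniformly close to the planar tail of $\mathcal{A}^{\eta_0}_*$; this hinges on the fact that both operators limit onto the same asymptotic wave-train operator at rate $\mathrm{O}(1/r)$, so that freezing at $r=R-\kappa^{-1}\log R$ introduces only an $\mathrm{O}(\log R / R)$ error, which is compatible with (indeed smaller than) the $\mathrm{O}(R^{-\gamma})$ error already present from \eqref{e:a2}. The $r$-dependent norm $X_r$ requires a little care — one uses that $\|\mathcal{I}(r)\|_{L(X_r,Y)}\leq C$ uniformly — but this is routine given \cite[Lemma~5.4]{ss-mem}.
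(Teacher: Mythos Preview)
Your proposal is correct and follows essentially the same approach as the paper: establish the dichotomy for the planar system \eqref{e:sw1} on $[R_1,\infty)$ via \cite{ss-mem}, extend $\mathcal{A}^\eta_R$ to the half-line by freezing coefficients at $r=R-\kappa^{-1}\log R$, and invoke the robustness Lemma~\ref{l:edr}. Your discussion of why the frozen tail remains uniformly close to $\mathcal{A}^{\eta_0}_*$ (both limiting onto the wave-train operator with $\mathrm{O}(1/r)$ corrections) is a helpful elaboration of a step the paper leaves implicit.
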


\subsection{Boundary-layer region}\label{s:p4}

We consider the region $r\in[R-\kappa^{-1}\log R,R]$. To facilitate comparison with the boundary sink, we use the independent variable $\rho=r-R$ instead of $r$. The linearization about the boundary sink $u_\mathrm{bs}^\mathrm{a}(\rho,\vartheta)$ in Archimedean coordinates is given by
\[\textstyle
u_\rho = \eta u - k_*\partial_\vartheta u + v, \quad
v_\rho = \eta v - k_*\partial_\vartheta v - \frac{v}{\rho+R}
- D^{-1}[\omega_R\partial_\vartheta + f_u(u_\mathrm{bs}^\mathrm{a}(\rho,\vartheta)) - \lambda] u
\]
with $\rho\in\R^-$. We know from \cite[Lemma~9.1]{ss-mem} that this equation has an exponential dichotomy on $\R^-$ and that the associated projections $P^\mathrm{s}_\mathrm{bs}(\rho;\lambda,\eta)$ converge to $P^\mathrm{s}_\mathrm{wt}(\lambda)$ as $\rho\to-\infty$. Since $\lambda\notin\Sigma_\mathrm{ext}^\mathrm{bs}$ and $\eta\in J_0(\lambda)$, we also know that $\Rg(P^\mathrm{u}_\mathrm{bs}(0;\lambda,\eta))\oplus E^\mathrm{bc}=X$, and the expressions in \cite[(3.20)]{pss} show that we can modify the exponential dichotomy of the boundary sink on $\R^-$ so that $\Rg(P^\mathrm{s}_\mathrm{bs}(0;\lambda,\eta))=E^\mathrm{bc}$. Next, we reformulate the linearization \eqref{e:tsw1} around $u_R^\mathrm{a}(\rho+R,\vartheta)$ using the coordinate $\rho=r-R$ to arrive at
\begin{align}\label{e:tbl1}
u_\rho = & \eta u - [k_R + \theta_R^\prime(\rho+R)]\partial_\vartheta u + v \\ \nonumber
v_\rho = & \eta v - [k_R + \theta_R^\prime(\rho+R)]\partial_\vartheta v \textstyle
- \frac{v}{\rho+R} - \frac{\partial_{\vartheta\vartheta} u}{(\rho+R)^2} \\ \nonumber & \textstyle
- D^{-1}[\omega_R\partial_\vartheta + f_u(u_R^\mathrm{a}(\rho+R,\vartheta)) - \lambda] u
\end{align}
with $\rho\in[-\kappa^{-1}\log R,0]$. The estimate \eqref{e:a2} shows that $u_R^\mathrm{a}(\rho+R,\cdot)$ is $1/R^\gamma$-close to the boundary sink $u_\mathrm{bs}^\mathrm{a}(\rho,\cdot)$. Using Lemma~\ref{l:edr} and the results in \cite[\S5.2 and \S9.2]{ss-mem} for the system \eqref{e:tbl1} with coefficients frozen at their values at $\rho=-\kappa^{-1}\log R$, we have the following result.

\begin{lemma}[Boundary-layer dichotomies]\label{l:bl}
Given $\delta_0>0$, $\lambda_0\in\Lambda_\epsilon$, and $\eta_0\in J_0(\lambda_0)$ there exist constants $\alpha,\delta,K,R_2>0$ so that the following is true. Equation \eqref{e:tbl1} has an exponential dichotomy with constant $K$ and rate $\alpha$ on $[-\kappa^{-1}\log R,0]$ uniformly in $\lambda\in U_\delta(\lambda_0)$, $\eta\in U_\delta(\eta_0)$, and $R\geq R_2$, and the associated projections $\tilde{P}^\mathrm{s}_R(\rho;\lambda,\eta)$ satisfy $\Rg(\tilde{P}^\mathrm{s}_R(0;\lambda,\eta))=E^\mathrm{bc}$ and
$\sup_{\rho\in[-\kappa^{-1}\log R,0]} \|\tilde{P}^\mathrm{s}_R(\rho;\lambda,\eta) - P^\mathrm{s}_\mathrm{bs}(\rho;\lambda_0,\eta_0)\|_{X_r} \leq \delta_0$.
\end{lemma}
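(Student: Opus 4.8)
The plan is to transplant the known exponential dichotomy of the boundary-sink linearization onto the truncated-spiral system \eqref{e:tbl1}, exploiting that over the boundary-layer window $[-\kappa^{-1}\log R,0]$ the two systems are $\rmO(R^{-\gamma})$-close by \eqref{e:a2}. First I would fix a rate $\alpha>0$ with $\eta_0\pm\alpha\in J_0(\lambda_0)$; since the endpoints of $J_0(\lambda)$ depend continuously on $\lambda$ and the complementary spectral projections $P^\mathrm{s,u}_\mathrm{wt}(\lambda)$ of $\mathcal{A}_\mathrm{wt}(\lambda)$ are continuous in $L(Y)$, I can choose $\delta>0$ so that $\eta\pm\alpha\in J_0(\lambda)$ and $\|P^\mathrm{s}_\mathrm{wt}(\lambda)-P^\mathrm{s}_\mathrm{wt}(\lambda_0)\|_{L(Y)}$ can be made arbitrarily small for all $(\lambda,\eta)\in U_\delta(\lambda_0)\times U_\delta(\eta_0)$; this $\delta$ will be shrunk finitely often below, and correspondingly the $r$-dependent wave-train projections on $X_r$ transferred via the isomorphisms $\mathcal{I}(r)$ of \S\ref{s:p3} stay uniformly close.

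Next I would make the boundary-sink dichotomy uniform in the parameters. By \cite[Lemma~9.1]{ss-mem}, the boundary-sink linearization displayed in \S\ref{s:p4} has at $(\lambda_0,\eta_0)$ an exponential dichotomy on $\R^-$ with projections converging to $P^\mathrm{s,u}_\mathrm{wt}(\lambda_0)$ as $\rho\to-\infty$, and after the modification at $\rho=0$ recalled in \S\ref{s:p4} --- which uses $\lambda_0\notin\Sigma_\mathrm{ext}^\mathrm{bs}$, $\eta_0\in J_0(\lambda_0)$, and the expressions in \cite[(3.20)]{pss} --- one has $\Rg(P^\mathrm{s}_\mathrm{bs}(0;\lambda_0,\eta_0))=E^\mathrm{bc}$. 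The $(\lambda,\eta)$-dependence of this linearization enters only through the $X_r$-bounded terms $\eta\,\mathrm{Id}$ and $D^{-1}\lambda$, so the operator difference between parameter values $(\lambda,\eta)$ and $(\lambda_0,\eta_0)$ has $L(X_r)$-norm $\leq C(|\lambda-\lambda_0|+|\eta-\eta_0|)$; applying Lemma~\ref{l:edr} on the unbounded interval $\R^-$ then yields, after a further shrinking of $\delta$, an exponential dichotomy of the boundary-sink linearization on $\R^-$ with a single constant $K_1$ and rate $\alpha$ for all $(\lambda,\eta)\in U_\delta(\lambda_0)\times U_\delta(\eta_0)$, whose projections are within $\delta_0/3$ of $P^\mathrm{s}_\mathrm{bs}(\cdot;\lambda_0,\eta_0)$ in $L(X_r)$ and still satisfy $\Rg(P^\mathrm{s}_\mathrm{bs}(0;\lambda,\eta))=E^\mathrm{bc}$.

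The remaining, and hardest, step is to pass from the boundary-sink linearization to \eqref{e:tbl1}. The obstacle is the genuinely second-order term $-(\rho+R)^{-2}\partial_{\vartheta\vartheta}u$ present in \eqref{e:tbl1} but not in the boundary-sink linearization; it is not a bounded perturbation on $X_r$, so Lemma~\ref{l:edr} cannot be used to compare the two systems directly. I would follow \cite[\S5.2 and \S9.2]{ss-mem}, which develop the dichotomy theory precisely for operators of the form \eqref{e:tsw2} that include this term: freeze the coefficients $u_R^\mathrm{a}(\rho+R,\cdot)$, $k_R+\theta_R^\prime(\rho+R)$, $\omega_R$ and the factors $1/(\rho+R)$, $1/(\rho+R)^2$ at their values at $\rho=-\kappa^{-1}\log R$, extend the resulting autonomous system to all of $\R^-$, and conclude from \cite[\S5.2]{ss-mem} --- using $1/(R-\kappa^{-1}\log R)\to0$ together with the $\rmO(R^{-\gamma})$-closeness of $u_R^\mathrm{a}$ to $u_\mathrm{bs}^\mathrm{a}$ from \eqref{e:a2} and the convergence $u_\mathrm{bs}^\mathrm{a}(\rho,\cdot)\to u_\mathrm{wt}$ --- that this frozen system has an exponential dichotomy on $\R^-$ with projections $\rmo(1)$-close as $R\to\infty$ to those of the analogously frozen boundary-sink system; \cite[\S9.2]{ss-mem} supplies this matching on all of $[-\kappa^{-1}\log R,0]$, including the preservation of the range $E^\mathrm{bc}$ at $\rho=0$, which is insensitive to the $1/(\rho+R)$-type terms.

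Finally, on the bounded interval $[-\kappa^{-1}\log R,0]$ the system \eqref{e:tbl1} differs from its frozen version only through the $\rho$-variation of the slowly varying coefficients over this logarithmically long window; this variation is $\rmo(1)$ by \eqref{e:a2} and the $C^1$-estimates of \cite[\S9.2]{ss-mem} (the second-order coefficient itself being $\rmO(R^{-2})$), and a final application of Lemma~\ref{l:edr}, again combined with \cite[\S5.2]{ss-mem} to absorb the second-order term, shows that for $R$ beyond some $R_2$ the system \eqref{e:tbl1} inherits an exponential dichotomy on $[-\kappa^{-1}\log R,0]$ with a uniform constant $K$ and rate $\alpha$ and projections $\tilde{P}^\mathrm{s}_R(\rho;\lambda,\eta)$ within $\delta_0/3$ of the frozen ones, hence within $\delta_0$ of $P^\mathrm{s}_\mathrm{bs}(\rho;\lambda_0,\eta_0)$, and with $\Rg(\tilde{P}^\mathrm{s}_R(0;\lambda,\eta))=E^\mathrm{bc}$. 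The single genuine obstacle throughout is the $(\rho+R)^{-2}\partial_{\vartheta\vartheta}$ term: it forces the detour through \cite[\S5.2]{ss-mem}, and it is the reason the boundary layer is taken only logarithmically long in $R$, so that all the remaining, honestly bounded perturbations are uniformly $\rmo(1)$.
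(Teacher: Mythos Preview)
Your proposal is correct and follows essentially the same approach as the paper: establish the boundary-sink dichotomy on $\R^-$ via \cite[Lemma~9.1]{ss-mem}, modify it at $\rho=0$ using \cite[(3.20)]{pss} so that the stable range equals $E^\mathrm{bc}$, freeze the coefficients of \eqref{e:tbl1} at $\rho=-\kappa^{-1}\log R$, and invoke Lemma~\ref{l:edr} together with \cite[\S5.2 and \S9.2]{ss-mem} to transfer the dichotomy, the $\rmO(R^{-\gamma})$-closeness of $u_R^\mathrm{a}$ to $u_\mathrm{bs}^\mathrm{a}$ from \eqref{e:a2} providing the smallness. Your write-up is in fact more explicit than the paper's one-sentence justification, in particular in isolating the unbounded perturbation $(\rho+R)^{-2}\partial_{\vartheta\vartheta}$ as the reason one must route through the machinery of \cite[\S5.2]{ss-mem} rather than apply Lemma~\ref{l:edr} naively.
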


\subsection{Matching far-field and boundary-layer regions}\label{s:p5}

First, we combine the results we obtained in \S\ref{s:p3} and \S\ref{s:p4} to conclude the existence of exponential dichotomies of the linearization
\begin{align}\label{e:tsw0}
\mathbf{u}_r = \mathcal{A}^\eta_R(r,\lambda) \mathbf{u}
\end{align}
associated with the truncated spiral wave $u_R$ on the interval $[R_1,R]$ for all $R\gg1$, where $\mathcal{A}^\eta_R(r,\lambda)$ has been defined in \eqref{e:tsw2}. Choose $\Lambda_\epsilon$ as in \eqref{e:ceps} and pick continuous functions $\eta_\pm(\lambda)$ so that $[\eta_-(\lambda),\eta_+(\lambda)]\in J_0(\lambda)$ for all $\lambda\in\Lambda_\epsilon$. We then define the compact set $\mathcal{C}_\epsilon:=\{(\lambda,\eta):\lambda\in\Lambda_\epsilon, \eta\in[\eta_-(\lambda),\eta_+(\lambda)]\}$.

\begin{proposition}\label{p:1}
Assume that the assumptions of Theorem~\ref{t:1} are met and choose $\mathcal{C}_\epsilon$ as above. For each $\delta_0>0$, there exist positive constants $\alpha,K,R_1,R_2$ so that the following is true. Equation \eqref{e:tsw0} has an exponential dichotomy $\Phi^\mathrm{s,u}_R(r,\rho;\lambda,\eta)$ with constant $K$ and rate $\alpha$ on $J=[R_1,R]$ uniformly in $(\lambda,\eta)\in\mathcal{C}_\epsilon$ and $R\geq R_2$, and the associated projections $P^\mathrm{s,u}_R(r;\lambda,\eta)$ satisfy $\Rg(P^\mathrm{s}_R(R;\lambda,\eta))=E^\mathrm{bc}$ and
\begin{eqnarray*}
\lefteqn{\sup_{r\in[R_1,R-\kappa^{-1}\log R]} \|P^\mathrm{s}_R(r;\lambda,\eta) - P^\mathrm{s}_*(r;\lambda,\eta)\|_{X_r}} \\ &&
+ \sup_{r\in[R-\kappa^{-1}\log R,R]} \|P^\mathrm{s}_R(r;\lambda,\eta) - P^\mathrm{s}_\mathrm{bs}(r-R;\lambda,\eta)\|_{X_r} \leq \delta_0.
\end{eqnarray*}
\end{proposition}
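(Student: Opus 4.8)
The plan is to produce the dichotomy on $[R_1,R]$ by gluing the far-field dichotomy of Lemma~\ref{l:ff}, which lives on $[R_1,R-\kappa^{-1}\log R]$, to the boundary-layer dichotomy of Lemma~\ref{l:bl}, which lives on $[R-\kappa^{-1}\log R,R]$. Since Lemmas~\ref{l:ff} and~\ref{l:bl} are local statements with uniformity only over neighborhoods $U_\delta(\lambda_0)\times U_\delta(\eta_0)$, I would first cover the compact set $\mathcal{C}_\epsilon$ by finitely many such neighborhoods, prove the claim on each, and take the worst of the finitely many resulting constants $\alpha,K,R_1,R_2$ at the end; thus it suffices to work near a fixed $(\lambda_0,\eta_0)\in\mathcal{C}_\epsilon$. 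Write $r_m:=R-\kappa^{-1}\log R$ for the matching radius, and note that $r_m\to\infty$ as $R\to\infty$.

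The gluing mechanism itself is the standard gluing lemma for exponential dichotomies on adjacent intervals (see \cite{pss}): the far-field dichotomy on $[R_1,r_m]$ (with unstable projection $P^\mathrm{u}_R(r;\lambda,\eta)$ from Lemma~\ref{l:ff}) and the boundary-layer dichotomy on $[r_m,R]$ (with stable projection $\tilde P^\mathrm{s}_R(r-R;\lambda,\eta)$ from Lemma~\ref{l:bl}) combine into an exponential dichotomy on $[R_1,R]$ — with rate $\alpha$ any number below both individual rates and constant $K$ controlled by the individual constants together with the norm of the projection onto the first summand in the splitting below — provided the transversality condition
\begin{equation*}
X_{r_m} = \Rg(\tilde P^\mathrm{s}_R(r_m-R;\lambda,\eta)) \oplus \Rg(P^\mathrm{u}_R(r_m;\lambda,\eta))
\end{equation*}
holds uniformly in $(\lambda,\eta)$ near $(\lambda_0,\eta_0)$ and in $R\geq R_2$. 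Verifying this is the heart of the argument. Transferring to the fixed space $Y$ with the uniformly bounded isomorphisms $\mathcal{I}(r_m)$ of \cite[Lemma~5.4]{ss-mem}, it suffices to show that the two ranges are small perturbations of the complementary wave-train subspaces $\Rg(P^\mathrm{s}_\mathrm{wt}(\lambda))$ and $\Rg(P^\mathrm{u}_\mathrm{wt}(\lambda))$, which are genuinely complementary because $\eta\in J_0(\lambda)$ keeps $\spec(\mathcal{A}_\mathrm{wt}(\lambda))$ off the line $-\eta+\rmi\R$. This smallness follows because the matching point recedes to infinity: on the far-field side, $P^\mathrm{u}_R(r_m;\lambda,\eta)$ is $\delta_0$-close to $P^\mathrm{u}_*(r_m;\lambda_0,\eta_0)$ by Lemma~\ref{l:ff}, while $P^\mathrm{u}_*(r;\lambda,\eta)\to P^\mathrm{u}_\mathrm{wt}(r;\lambda)$ as $r\to\infty$ by \cite[Proposition~5.5]{ss-mem}; on the boundary-layer side, $\tilde P^\mathrm{s}_R(r_m-R;\lambda,\eta)$ is $\delta_0$-close to $P^\mathrm{s}_\mathrm{bs}(r_m-R;\lambda_0,\eta_0)$ by Lemma~\ref{l:bl}, while $P^\mathrm{s}_\mathrm{bs}(\rho;\lambda,\eta)\to P^\mathrm{s}_\mathrm{wt}(\lambda)$ as $\rho\to-\infty$ and $r_m-R=-\kappa^{-1}\log R\to-\infty$. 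Choosing $R_2$ large and $\delta_0$ small, and shrinking the neighborhood of $(\lambda_0,\eta_0)$, the transversality together with a uniform lower bound on the angle of the splitting follows from the perturbation theory of complementary closed subspaces, which also yields the uniform bound on $K$.

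Finally, the stated properties of the glued projections are read off from the construction. The glued stable subspace at $r=R$ coincides with that of the boundary-layer dichotomy, so $\Rg(P^\mathrm{s}_R(R;\lambda,\eta))=\Rg(\tilde P^\mathrm{s}_R(0;\lambda,\eta))=E^\mathrm{bc}$ by Lemma~\ref{l:bl}. On the boundary-layer subinterval the glued stable projection differs from $\tilde P^\mathrm{s}_R(r-R;\lambda,\eta)$ only by the exponentially small matching correction supplied by the gluing lemma, and hence, using Lemma~\ref{l:bl} together with continuity of $P^\mathrm{s}_\mathrm{bs}$ in $(\lambda,\eta)$, is $\delta_0$-close to $P^\mathrm{s}_\mathrm{bs}(r-R;\lambda,\eta)$; symmetrically, on the far-field subinterval the glued stable projection is $\delta_0$-close to $P^\mathrm{s}_*(r;\lambda,\eta)$ by Lemma~\ref{l:ff} and continuity of $P^\mathrm{s}_*$ in $(\lambda,\eta)$. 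It is essential here that the boundary-layer region, although its length $\kappa^{-1}\log R$ grows, contributes only the harmless factor $K\rme^{-\alpha\kappa^{-1}\log R}=KR^{-\alpha/\kappa}$ to all estimates, so no constants degenerate as $R\to\infty$. I expect the main obstacle to be precisely the uniform verification of the transversality at the receding matching point — tracking simultaneously the convergence of the truncated-spiral far-field data and of the boundary-sink data to the common asymptotic wave train, uniformly in $(\lambda,\eta)\in\mathcal{C}_\epsilon$ — whereas the gluing mechanism and the bookkeeping of rates and constants are routine once \cite{pss} is invoked.
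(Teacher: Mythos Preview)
Your proposal is correct and follows essentially the same route as the paper: glue the far-field dichotomy of Lemma~\ref{l:ff} to the boundary-layer dichotomy of Lemma~\ref{l:bl} at $r=R-\kappa^{-1}\log R$ via \cite[(3.20)]{pss}, using that both sets of projections are close to the wave-train projections there, and then pass from local to global uniformity by compactness of $\mathcal{C}_\epsilon$. The paper is terser about the transversality at the matching point (it simply notes the projections are ``arbitrarily close to the spectral projections of the wave-train projections'') and records the explicit constant $K(1+2K)^2$ for the glued dichotomy, but the substance is the same.
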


\begin{proof}
We proved the existence of exponential dichotomies for \eqref{e:tsw0} in Lemmas~\ref{l:ff} and~\ref{l:bl} separately on $[R_1,R-\kappa^{-1}\log R]$ and  $[R-\kappa^{-1}\log R,R]$. Since the associated projections evaluated at $r=R-\kappa^{-1}\log R$ are arbitrarily close to the spectral projections of the wave-train projections, we can use \cite[(3.20)]{pss} to redefine the projections and exponential dichotomies of \eqref{e:tsw0} so that they are continuous at $r=R-\kappa^{-1}\log R$ and therefore give dichotomies on $[R_1,R]$. This process does not change the rate $\alpha$ and replaces the constant $K$ by $K(1+2K)^2$. Next, the results in Lemmas~\ref{l:ff} and~\ref{l:bl}, and therefore the extension we just discussed, are locally uniform, and we can use compactness of $\mathcal{C}_\epsilon$ to prove that, given $\delta_0>0$, the radii $R_1,R_2$, the constant $K$, and the rate $\alpha$ can be chosen uniformly in $(\lambda,\eta)\in\mathcal{C}_\epsilon$.
\end{proof}

Next, given $g\in L^2(B_R(0))$, we need to solve \eqref{e:evp3} and establish uniform estimates for the solution. We switch to Archimedean coordinates, define $\mathbf{g}(r):=(0,D^{-1}g(r,\cdot))^*$, and rewrite \eqref{e:evp3} as $\mathbf{u}_r=\mathcal{A}^\eta_R(r,\lambda)\mathbf{u}+\mathbf{g}(r)$. Using \cite[\S6.2]{ss-mem} and Proposition~\ref{p:1}, we see that the function
\begin{align}\label{e:soln-ff}
\mathbf{u}_+(r) & = \Phi_R^\mathrm{s}(r,R_1;\lambda,\eta) \mathbf{a}^\mathrm{s}_+
+ \Phi_R^\mathrm{u}(r,R;\lambda,\eta) \mathbf{a}^\mathrm{u}_+
+ \int_{R_1}^r \Phi_R^\mathrm{s}(r,\rho;\lambda,\eta) \mathbf{g}(\rho)\,\rmd\rho \\ \nonumber &
+ \int_R^r \Phi_R^\mathrm{u}(r,\rho;\lambda,\eta) \mathbf{g}(\rho)\,\rmd\rho
\end{align}
is a solution with 
\begin{equation}\label{e:bdd-ff}\textstyle
\sup_{r\in[R_1,R]} |\mathbf{u}_+(r)|_{X_r} \leq K\left( |\mathbf{a}^\mathrm{s}_+|_{X_{R_1}} + |\mathbf{a}^\mathrm{u}_+|_{X_R} + \frac{2}{\alpha} |g|_{L^2(B_R(0))} \right)
\end{equation}
for arbitrary $\mathbf{a}^\mathrm{s}_+\in\Rg(P_R^\mathrm{s}(R_1;\lambda,\eta))$ and $\mathbf{a}^\mathrm{u}_+\in\Rg(P_R^\mathrm{u}(R;\lambda,\eta))$. 

\subsection{Core region}\label{s:p1}

It remains to analyse the region $r\in[0,R_1]$ with $R_1$ as in Proposition~\ref{p:1}. This region was investigated in \cite[\S5.1, \S5.3, and \S5.5]{ss-mem}, and we therefore only summarize the results proved there. The equation $\mathbf{u}_r=\mathcal{A}^\eta_R(r,\lambda)\mathbf{u}+\mathbf{g}(r)$ has exponential dichotomies $\widehat{P}_R^\mathrm{s,u}(r;\lambda,\eta)$ on $[0,R_1]$ and has for each $\mathbf{b}^\mathrm{u}_-\in\Rg(\widehat{P}_R^\mathrm{u}(R_1;\lambda,\eta))$ a unique bounded solution $\mathbf{u}_-(r)$ with
\begin{eqnarray}
\sup_{r\in[0,R_1]} |\mathbf{u}_-(r)|_X & \leq & K_1\left( |\mathbf{b}^\mathrm{u}_-|_{X_{R_1}} + 2|g|_{L^2(B_R(0))} \right)
\label{e:bdd-c}\textstyle \\ \label{e:soln-c}
\mathbf{u}_-(R_1) & = & \textstyle \mathbf{b}^\mathrm{u}_- + \int_0^{R_1} \widehat{\Phi}_{R}^\mathrm{s}(R_1,\rho;\lambda,\eta) \mathbf{g}(\rho)\,\rmd\rho 
\end{eqnarray}
uniformly in $(\lambda,\eta)$. This completes the analysis of \eqref{e:evp3} for $r\in[0,R_1]$.

\subsection{Uniform resolvent estimates}\label{s:p6}

Equations \eqref{e:soln-ff} and \eqref{e:soln-c} provide solutions $\mathbf{u}_+(r)$ and $\mathbf{u}_-(r)$ of $\mathbf{u}_r=\mathcal{A}^\eta_R(r,\lambda)\mathbf{u}+\mathbf{g}(r)$ on $[R_1,R]$ and $[0,R_1]$, respectively. It remains to solve the matching conditions $\mathbf{u}_+(R_1)=\mathbf{u}_-(R_1)$ and the boundary conditions $\mathbf{u}_+(R)\in E^\mathrm{bc}$, which are given by
\begin{eqnarray}\label{e:match1}
0 & = &
\mathbf{a}^\mathrm{s}_+ + \Phi_R^\mathrm{u}(R_1,R;\lambda,\eta) \mathbf{a}^\mathrm{u}_+ + \int_R^{R_1} \Phi_R^\mathrm{u}(R_1,\rho;\lambda,\eta) \mathbf{g}(\rho)\,\rmd\rho
\\ \nonumber &&
- \mathbf{b}^\mathrm{u}_- - \int_0^{R_1} \widehat{\Phi}_{R}^\mathrm{s}(R_1,\rho;\lambda,\eta) \mathbf{g}(\rho)\,\rmd\rho \\ \label{e:match2}
\mathbf{u}_+(R) & = & \mathbf{a}^\mathrm{u}_+ +\Phi_R^\mathrm{s}(R,R_1;\lambda,\eta) \mathbf{a}^\mathrm{s}_+
+ \int_{R_1}^R \Phi_R^\mathrm{s}(R,\rho;\lambda,\eta) \mathbf{g}(\rho)\,\rmd\rho \in E^\mathrm{bc}.
\end{eqnarray}
Since $\lambda\notin\Sigma_\mathrm{ext}^\mathrm{sp}$, it follows from \cite[Proposition~6.1 and \S5.5]{ss-mem} that the map
\[
\iota(\lambda,\eta)\colon \quad
\Rg(P^\mathrm{s}_R(R_1;\lambda,\eta))\times\Rg(\widehat{P}^\mathrm{u}_R(R_1;\lambda,\eta)) \longrightarrow X_{R_1}, \quad
(\mathbf{a}^\mathrm{s}_+,\mathbf{b}^\mathrm{u}_-) \longmapsto \mathbf{a}^\mathrm{s}_+ - \mathbf{b}^\mathrm{u}_-
\]
is invertible with inverse that is bounded uniformly in $(\lambda,\eta)$. Furthermore, we know from Proposition~\ref{p:1} that $\Rg(P^\mathrm{u}_R(R;\lambda,\eta))\oplus E^\mathrm{bc}=X_R$, and we also have
\[\textstyle
|\Phi_R^\mathrm{u}(R_1,R;\lambda,\eta) \mathbf{a}^\mathrm{u}_+|_{X_{R_1}}
+ |\Phi_R^\mathrm{s}(R,R_1;\lambda,\eta) \mathbf{a}^\mathrm{s}_+|_{X_R} \leq 
K\rme^{-\alpha(R-R_1)} \left(|\mathbf{a}^\mathrm{u}_+|_{X_{R_1}} + |\mathbf{a}^\mathrm{s}_+|_{X_R}\right)
\]
Thus, we can solve \eqref{e:match1}-\eqref{e:match2} uniquely for $(\mathbf{a}^\mathrm{s}_+,\mathbf{a}^\mathrm{u}_+,\mathbf{b}^\mathrm{u}_-)$ as a linear function of $\mathrm{g}$ with bound
\[
|\mathbf{a}^\mathrm{s}_+|_{X_{R_1}} + |\mathbf{a}^\mathrm{u}_+|_{X_R} + |\mathbf{b}^\mathrm{u}_-|_{X_{R_1}} \leq C_0 |g|_{L^2(B_R(0))},
\]
where $C_0=C_0(K,K_1,\alpha)$ does not depend on $R$ or on $(\lambda,\eta)\in\mathcal{C}_\epsilon$. Substituting these bounds into the estimates \eqref{e:bdd-c} and \eqref{e:bdd-ff} shows that the first component $u(r,\varphi)$ of $\mathbf{u}(r)$ satisfies $|u|_{L^2(B_R(0))} \leq C_1 |g|_{L^2(B_R(0))}$, where $C_1$ does not depend on $R$ or on $(\lambda,\eta)\in\mathcal{C}_\epsilon$. Finally, the arguments in \cite[\S6.2]{ss-mem} show that $u\in H^2(B_R(0))$. This completes the proof of Theorem~\ref{t:1}.

\subsection{Proof of Theorem~\ref{p:rg}}\label{s:p7}

Theorem~\ref{p:rg}(ii) follows directly from Theorem~\ref{t:1} since we can apply this theorem with $\eta=0$ which lies in $J_0(\lambda_*)$ by assumption. It therefore remains to prove statement~(i), which claims that $\|(\mathcal{L}_R-\lambda)^{-1}\|$ grows exponentially in $R$. Since we assume that $0\notin\overline{J_0(\lambda_*)}=[-\Re\nu_0(\lambda_*),-\Re\nu_{-1}(\lambda_*)]$, we have either $\Re\nu_{-1}(\lambda_*)>0$ or else $\Re\nu_0(\lambda_*)<0$. We focus on the case $\Re\nu_{-1}(\lambda_*)>0$ and will comment later on the second case, which can be tackled analogously. Throughout the proof, we will fix $\lambda_*$ and $\eta_*\in J_0(\lambda_*)$ so that $\eta_*<0$ and omit the superscripts and the dependence on $(\lambda,\eta)$ in the remainder of this section, since variations in $(\lambda,\eta)$ can be included as in the previous sections.

We denote by $\mathbf{u}_{-1}=(u_{-1},v_{-1})$ the eigenvector of $\mathcal{A}^\eta_\mathrm{wt}:=\mathcal{A}_\mathrm{wt}+\eta\mathds{1}$ belonging to the simple eigenvalue $\tilde{\nu}_{-1}:=\nu_{-1}+\eta$ and by $\mathbf{u}^\mathrm{ad}_{-1}$ the corresponding eigenvector of the adjoint operator. It follows from \cite[\S4.3]{ss-mem} that $v_{-1}\neq0$. For later use, we also define $P^\mathrm{c}_\mathrm{wt}\mathbf{v}:=\langle\mathbf{u}^\mathrm{ad}_{-1},\mathbf{v}\rangle \mathbf{u}_{-1}$. Our strategy for establishing Theorem~\ref{p:rg} is to prove that the norm $|w|_{L^2(B_R(0))}$ of the solution $w$ of
\begin{equation}\label{e:rg0}
(\mathcal{L}_R-\lambda) w = h, \qquad
h(r,\varphi) := \chi_{[R_2,R_2-d]}(r) v_{-1}(\varphi)
\end{equation}
grows exponentially in $R$, where $\chi_I(r)$ denotes the indicator function of the interval $I\subset\R$ and $R_2,d$ are $R$-independent constants that we will choose later. We will rely on the results in \S\ref{s:p3} for the linear system (\ref{e:tsw1})
\begin{align}\label{e:rg1}
\mathbf{u}_r = \mathcal{A}^\eta_R(r,\lambda) \mathbf{u}
\end{align}
associated with the truncated spiral wave $u_R$, posed on the exponentially weighted spaces and extended to $[R_1,\infty)$ by freezing its coefficients at their value for $r=R-\kappa^{-1}\log R$. Lemma~\ref{l:ff} shows that \eqref{e:rg1} has exponential dichotomies $\Phi^\mathrm{s,u}_R(r,s)$ with constant $K$ and rate $\alpha>0$ on $[R_1,\infty)$ and that the associated projections $P^\mathrm{s}_R(r)$ satisfy
\[
\sup_{r\in[R_1,R-\kappa^{-1}\log R]} \|P^\mathrm{s}_R(r) - P^\mathrm{s}_*(r)\|_{X_r} \leq \frac{C}{R^\gamma}.
\]
Our first result provides asymptotic expansions of bounded solutions to \eqref{e:rg1}. Recall that we assumed that $\Re\nu_{-2}(\lambda)<\Re\nu_{-1}(\lambda)$.

\begin{lemma}\label{l:rg1}
There are positive constants $a_0,\beta,C_0>0$, constants $b_0\in\R$ and $R_0\geq R_1$, a real-valued function $a(r,s)$, and a projection $P^\mathrm{c}_R(r)$ so that $\|P^\mathrm{c}_R(r)-P^\mathrm{c}_\mathrm{wt}\|\leq C_0(\frac{1}{r}+\frac{1}{R^\gamma})$, $|a(r,s)|\leq a_0$ for all $r\geq s$, and
\begin{equation}\label{e:rg2}
\left|\Phi^\mathrm{s}_R(r,s)\mathbf{v}_0 - \left(\frac{r}{s}\right)^{b_0} \rme^{\tilde{\nu}_{-1}(r-s)} \rme^{a(r,s)} P^\mathrm{c}_R(s)\mathbf{v}_0 \right|_{X_r} \leq C_0 \rme^{(\tilde{\nu}_{-1}-\beta)(r-s)} |\mathbf{v}_0|_{X_s}
\end{equation}
uniformly in $R_0\leq s\leq r\leq R-3\kappa^{-1}\log R$ for each $\mathbf{v}_0\in X_s$.
\end{lemma}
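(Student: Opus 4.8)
The plan is to carry out an adiabatic (WKB-type) reduction of the stable evolution $\Phi^\mathrm{s}_R(r,s)$ of \eqref{e:rg1} onto the one-dimensional subbundle carrying the slowest decay, namely the one attached to the spatial eigenvalue $\tilde\nu_{-1}=\nu_{-1}+\eta$. The structural input is that the coefficient operator $\mathcal{A}^\eta_R(r,\lambda)$ in \eqref{e:tsw2} is asymptotically autonomous on the range under consideration: using \eqref{e:a2} and $u_*^\mathrm{a}(r,\cdot)\to u_\mathrm{wt}$ one writes, for $R_0\le r\le R-3\kappa^{-1}\log R$,
\[
\mathcal{A}^\eta_R(r,\lambda) = \mathcal{A}^\eta_\mathrm{wt}(\lambda) + \tfrac1r\,\mathcal{B}_1 + \mathcal{R}_R(r),
\]
where $\mathcal{B}_1\in L(X_r)$ is the explicit coefficient of $1/r$ in \eqref{e:sw2} --- the term $-\tfrac1r v$ together with the $1/r$-parts of $\theta_*^\prime(r)$ and of $f_u(u_*^\mathrm{a}(r,\cdot))-f_u(u_\mathrm{wt})$ --- and $\|\mathcal{R}_R(r)\|_{L(X_r)}\le C(r^{-2}+R^{1-\gamma}\rme^{-\kappa(R-r)})$, which is integrable over the range with integral $\le C/R_0$. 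I fix $(\lambda,\eta)$ near $(\lambda_*,\eta_*)$ with $\eta_*\in J_0(\lambda_*)$, $\eta_*<0$, $\Re\nu_{-1}(\lambda_*)>0$; local uniformity in $(\lambda,\eta)$ is handled exactly as in the previous subsections.

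Since $\eta_*\in J_0(\lambda_*)$ and $\Re\nu_{-1}(\lambda_*)>0$, we have $\Re\tilde\nu_{-1}=\Re\nu_{-1}+\eta_*<0$, and Definition~\ref{H1}(iii)-(iv) together with $\Re\nu_{-2}(\lambda_*)<\Re\nu_{-1}(\lambda_*)$ and $\eta_*>-\Re\nu_0(\lambda_*)$ shows that $\tilde\nu_{-1}$ is a simple eigenvalue of $\mathcal{A}^\eta_\mathrm{wt}$ that is strictly dominant within the stable spectrum: $\spec(\mathcal{A}^\eta_\mathrm{wt})\cap\{\Re\le0\}\setminus\{\tilde\nu_{-1}\}\subset\{\Re<\Re\tilde\nu_{-1}-2\beta\}$ for some $\beta>0$, while $\spec(\mathcal{A}^\eta_\mathrm{wt})\cap\{\Re\ge0\}\subset\{\Re>0\}$. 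Hence, for $r\ge R_0$ large, $\mathcal{A}^\eta_R(r,\lambda)$ has a simple isolated eigenvalue $\mu_R(r)=\tilde\nu_{-1}+b_0/r+\mathrm{O}(r^{-2}+R^{1-\gamma}\rme^{-\kappa(R-r)})$ and a normalized eigenvector $\mathbf{e}(r)=\mathbf{u}_{-1}+\mathrm{O}(1/r)$ that is $C^1$ in $r$ with $\mathbf{e}^\prime(r)=\mathrm{O}(1/r^2)$, and $b_0=\langle\mathbf{u}^\mathrm{ad}_{-1},\mathcal{B}_1\mathbf{u}_{-1}\rangle$ under the normalization $\langle\mathbf{u}^\mathrm{ad}_{-1},\mathbf{u}_{-1}\rangle=1$. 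A short computation using the explicit form of $\mathcal{B}_1$ and the structure of the wave-train eigenfunctions shows $b_0\in\R$; heuristically $b_0$ is the algebraic decay exponent produced by the radial geometry (the analogue of the $r^{-1/2}$ factor for radial functions in the plane), corrected by the spiral's asymptotic phase drift.

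Next I would extract the invariant subbundle. By Lemma~\ref{l:ff}, \eqref{e:rg1} has an exponential dichotomy $\Phi^\mathrm{s,u}_R(r,s)$ on $[R_1,\infty)$ whose projections $P^\mathrm{s,u}_R(r)$ are within $\mathrm{O}(1/r+1/R^\gamma)$ of the wave-train spectral projections for $r\le R-\kappa^{-1}\log R$. Applying the robustness statement (Lemma~\ref{l:edr}) and the dichotomy surgery of \cite{pss} to the restriction of \eqref{e:rg1} to $\Rg P^\mathrm{s}_R(r)$ --- on which the frozen operator $\mathcal{A}^\eta_R(r,\lambda)$ splits with gap $\ge\beta$ into the $\mu_R(r)$-eigenspace and its complement, and the $r$-dependent perturbation is small and integrable --- produces, for $r\ge R_0$, a genuinely flow-invariant rank-one subbundle $\Rg P^\mathrm{c}_R(r)$ with $\|P^\mathrm{c}_R(r)-P^\mathrm{c}_\mathrm{wt}\|\le C_0(1/r+1/R^\gamma)$, complemented inside $\Rg P^\mathrm{s}_R(r)$ by a strongly stable subbundle on which $\Phi^\mathrm{s}_R(r,s)$ decays at rate $\Re\tilde\nu_{-1}-\beta$ or faster. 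On the invariant subbundle, with the section $\mathbf{e}(r)$ one has $\Phi^\mathrm{s}_R(r,s)\mathbf{e}(s)=m(r,s)\mathbf{e}(r)$; differentiating and pairing with the adjoint section gives $\partial_r m=(\mu_R(r)+e_R(r))m$ with $e_R(r)=\mathrm{O}(r^{-2}+R^{1-\gamma}\rme^{-\kappa(R-r)})$, coming from $\mathbf{e}^\prime$ and from $\mathbf{e}$ not being exactly the eigenvector of the perturbed flow. Integrating and using that $\int_s^r(\mu_R(\rho)-\tilde\nu_{-1}-b_0/\rho)\,\rmd\rho+\int_s^r e_R(\rho)\,\rmd\rho$ is bounded, one obtains
\[
m(r,s)=(r/s)^{b_0}\,\rme^{\tilde\nu_{-1}(r-s)}\,\rme^{a(r,s)},\qquad |a(r,s)|\le a_0,
\]
and the imaginary part of $a$ is of the form $\psi(r)-\psi(s)$ with $\psi$ bounded and can be removed by re-phasing $\mathbf{e}(r)$ (which does not change $P^\mathrm{c}_R$), so $a$ is real-valued. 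Splitting $\Phi^\mathrm{s}_R(r,s)\mathbf{v}_0=\Phi^\mathrm{s}_R(r,s)P^\mathrm{c}_R(s)\mathbf{v}_0+\Phi^\mathrm{s}_R(r,s)(\mathds{1}-P^\mathrm{c}_R(s))P^\mathrm{s}_R(s)\mathbf{v}_0$ and bounding the second term by $C_0\rme^{(\Re\tilde\nu_{-1}-\beta)(r-s)}|\mathbf{v}_0|_{X_s}$ then gives \eqref{e:rg2}.

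The only $R$-dependent parts are the $\mathrm{O}(R^{1-\gamma}\rme^{-\kappa(R-r)})$ corrections to the coefficients, which on $[R_0,R-3\kappa^{-1}\log R]$ are bounded by $CR^{-\gamma-2}$ with integral $\mathrm{O}(R^{-\gamma-1})$; they perturb $P^\mathrm{c}_R$ by $\mathrm{O}(R^{-\gamma})$ and multiply $m(r,s)$ by $\rme^{\mathrm{O}(R^{-\gamma-1})}$, both absorbed into the stated bounds, and the cutoff $r\le R-3\kappa^{-1}\log R$ keeps us away from the frozen extension and the boundary-layer region. The main obstacle will be the infinite-dimensional adiabatic step: because the coupling between the slow (center) direction and the strongly stable directions is controlled only through exponential dichotomies --- not through self-adjointness or a numerical-range estimate --- one has to establish the flow-invariant one-dimensional subbundle with the stated $\mathrm{O}(1/r)$ closeness and, more delicately, propagate the \emph{sharp} algebraic prefactor $(r/s)^{b_0}$ through the non-autonomous $1/r$-drift rather than a crude $\rme^{(\Re\tilde\nu_{-1}+\epsilon)(r-s)}$ bound; this is precisely where simplicity of $\tilde\nu_{-1}$ and the gap hypothesis $\Re\nu_{-2}(\lambda_*)<\Re\nu_{-1}(\lambda_*)$ are used. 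Showing that $b_0$ is real and computing it is a short but somewhat delicate side calculation.
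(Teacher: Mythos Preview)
Your proposal is correct and follows essentially the same route as the paper: both split the stable evolution into a rank-one center piece attached to the simple spatial eigenvalue $\tilde\nu_{-1}$ and a strongly-stable remainder decaying at rate $\tilde\nu_{-1}-\beta$, reduce the center piece to a scalar ODE of the form $\mathbf{v}^\mathrm{c}_r=[\tilde\nu_{-1}+b_0/r+\text{(integrable)}]\mathbf{v}^\mathrm{c}$, and integrate to obtain the $(r/s)^{b_0}\rme^{\tilde\nu_{-1}(r-s)}\rme^{a(r,s)}$ factor. The paper's proof is terse because it imports this center--strong-stable decomposition and the scalar reduced equation directly from \cite[Equation~(8.7), Proposition~10.4, and Step~4 in \S11.3]{ss-mem}, whereas you reconstruct it from the robustness Lemma~\ref{l:edr} and the dichotomy surgery of \cite{pss}; the content is the same.
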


\begin{proof}
It was shown in \cite[Equation~(8.7), Proposition~10.4, and Step~4 in \S11.3]{ss-mem} that solutions $\mathbf{v}(r)$ of \eqref{e:rg1} in the center-stable directions can be written in the form $\mathbf{v}(r)=\mathbf{v}^\mathrm{c}(r)+\Phi^\mathrm{ss}_R(r,s)\mathbf{v}(s)$, where $\|\Phi^\mathrm{ss}_R(r,s)\|\leq C_0\rme^{(\tilde{\nu}_{-1}-\beta)(r-s)}$ for each fixed $\beta\in(0,\Re\nu_{-1}-\Re\nu_{-2})$, and $\mathbf{v}^\mathrm{c}(r)=P^\mathrm{c}_R(r)\mathbf{v}(r)$ satisfies the scalar linear ODE
\[
\mathbf{v}^\mathrm{c}_r = \left[\tilde{\nu}_{-1} + \frac{b_0}{r} + \rmO\left(\frac{1}{r^2} + \rme^{-\kappa(R-\kappa^{-1}\log R-r)}\right) \right] \mathbf{v}^\mathrm{c}.
\]
Integrating this equation gives the expression \eqref{e:rg2}, where $a(r,s)$ is given by
\[
\int_s^r \rmO\left(\frac{1}{\rho^2} + \rme^{-\kappa(R-\kappa^{-1}\log R-\rho)}\right) \,\rmd\rho \leq C_0\left(\frac{1}{R_0} + \frac{1}{R}\right)
\]
for $R_0\leq s\leq r\leq R-3\kappa^{-1}\log R$. This completes the proof of the lemma.
\end{proof}

Next, consider \eqref{e:rg0} written in exponentially weighted spaces as
\begin{equation}\label{e:rg3}
\mathbf{u}_r = \mathcal{A}^\eta_R(r,\lambda) \mathbf{u} + \mathbf{g}(r), \quad
\mathbf{g}(r) = \begin{pmatrix} 0 \\ D^{-1} g(r,\cdot) \end{pmatrix}, \quad
g(r,\cdot) := \rme^{\eta r} h(r,\cdot).
\end{equation}
Note that $w(r):=\rme^{-\eta r}P_1\mathbf{u}(r)$ is then the solution of \eqref{e:rg0} and that we have
\[
|g|_{L^2(B_R(0))} = \rme^{\eta R_2} \frac{\sqrt{(1-2\eta R_2)(\rme^{-2\eta d}-1)}}{2\eta} |v_{-1}|_{L^2(S^1)}.
\]
In \S\ref{s:p6}, we constructed solutions of \eqref{e:rg3} via a variation-of-constants formula on $[R_1,R]$ after combining the exponential dichotomies we had previously constructed separately in the far field $[R_1,R-\kappa^{-1}\log R]$ and the boundary-layer region $[R-\kappa^{-1}\log R,R]$. Here, we will instead use the far-field dichotomies on $[R_1,R-\kappa^{-1}\log R]$ and introduce a second matching step at $r=R-\kappa^{-1}\log R$ with the solution in the boundary-layer region. Proceeding in the same way as in \S\ref{s:p6}, we find that the solution $\mathbf{u}(r)$ of \eqref{e:rg3} is of the form
\begin{eqnarray*}
\mathbf{u}(r) & = & \Phi_R^\mathrm{s}(r,R_1) \mathbf{a}^\mathrm{s}
+ \Phi_R^\mathrm{u}(r,R-\kappa^{-1}\log R) \mathbf{a}^\mathrm{u}
+ \int_{R_1}^r \Phi_R^\mathrm{s}(r,\rho) \mathbf{g}(\rho)\,\rmd\rho \\ &&
+ \int_{R-\kappa^{-1}\log R}^r \Phi_R^\mathrm{u}(r,\rho) \mathbf{g}(\rho)\,\rmd\rho
\end{eqnarray*}
for $r\in[R_1,R-\kappa^{-1}\log R]$, where $\mathbf{a}^\mathrm{s}$ and $\mathbf{a}^\mathrm{u}$ arise from the matching conditions and satisfy
\begin{equation}\label{e:rg5}
|\mathbf{a}^\mathrm{s}|_{X_{R_1}} + |\mathbf{a}^\mathrm{u}|_{X_{R-\kappa^{-1}\log R}} \leq C_0|g|_{L^2(B_R(0))}
\leq C_0 \rme^{\eta R_2} \sqrt{R_2d} |v_{-1}|_{L^2(S^1)}.
\end{equation}
Since the stable projections $P^\mathrm{s}_R(r)$ are uniformly close to the wave-train projections for $r\geq R_1$, we conclude that there is a constant $c_0>0$ so that
\begin{equation}\label{e:rg4}
|\mathbf{u}(r)|_{X_r} \geq c_0 \left| \Phi_R^\mathrm{s}(r,R_1) \mathbf{a}^\mathrm{s} + \int_{R_1}^r \Phi_R^\mathrm{s}(r,\rho) \mathbf{g}(\rho)\,\rmd\rho \right|_{X_r}
\end{equation}
uniformly in $r\geq R_1$.

\begin{lemma}\label{l:rg2}
Choose $\epsilon$ so that $0<\epsilon<\min\{\beta,\nu_{-1}\}$, then there are constants $c_1,d>0$ and $R_3\geq R_2\geq R_1$ so that the solution of \eqref{e:rg3} satisfies $|\mathbf{u}(r)|_{X_r}\geq c_1\rme^{(\tilde{\nu}_{-1}-\epsilon)r}$ uniformly in $r\in[R_3,R-3\kappa^{-1}\log R]$.
\end{lemma}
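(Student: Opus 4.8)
The plan is to extract exponential growth from the variation-of-constants representation of $\mathbf{u}(r)$ by isolating the contribution of the weakest stable spatial eigenvalue $\tilde\nu_{-1}$, which by assumption has real part $\Re\nu_{-1}(\lambda_*)+\eta_*>0$ after the weight (recall $\eta_*\in J_0(\lambda_*)$ with $\Re\nu_{-1}>0$, so $\tilde\nu_{-1}$ still sits to the right of $\tilde\nu_{-2}$ and can even have positive real part). First I would use the lower bound \eqref{e:rg4}, which reduces the task to bounding $\Phi_R^\mathrm{s}(r,R_1)\mathbf{a}^\mathrm{s}+\int_{R_1}^r\Phi_R^\mathrm{s}(r,\rho)\mathbf{g}(\rho)\,\rmd\rho$ from below. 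Since the forcing $\mathbf{g}(\rho)=(0,D^{-1}\rme^{\eta\rho}v_{-1}(\cdot))^*$ is supported on $[R_2,R_2+d]$ (a small, $R$-independent interval sitting just above $R_1$) and is built from the eigenfunction $v_{-1}$, the key point is that $P^\mathrm{c}_R(\rho)\mathbf{g}(\rho)$ has a nonzero component along $\mathbf{u}_{-1}$: this follows from $v_{-1}\neq0$ (quoted from \cite[\S4.3]{ss-mem}) and the expression $P^\mathrm{c}_\mathrm{wt}\mathbf{v}=\langle\mathbf{u}^\mathrm{ad}_{-1},\mathbf{v}\rangle\mathbf{u}_{-1}$, together with the fact that $\mathbf{u}^\mathrm{ad}_{-1}$ pairs nontrivially with vectors of the form $(0,*)$. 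Thus $\int_{R_2}^{R_2+d}\Phi_R^\mathrm{s}(r,\rho)\mathbf{g}(\rho)\,\rmd\rho$, when pushed forward using the asymptotic expansion \eqref{e:rg2} from Lemma~\ref{l:rg1}, produces a term of size $\gtrsim\rme^{\tilde\nu_{-1}r}$ up to the algebraic factor $(r/\rho)^{b_0}$, the bounded phase $\rme^{a(r,\rho)}$, and a nonzero constant coming from the $\rho$-integral over the support.

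The second step is to control the competing term $\Phi_R^\mathrm{s}(r,R_1)\mathbf{a}^\mathrm{s}$ and the exponentially small remainder in \eqref{e:rg2}. By \eqref{e:rg5}, $|\mathbf{a}^\mathrm{s}|_{X_{R_1}}\leq C_0\rme^{\eta R_2}\sqrt{R_2 d}\,|v_{-1}|_{L^2}$, and applying \eqref{e:rg2} to $\Phi_R^\mathrm{s}(r,R_1)\mathbf{a}^\mathrm{s}$ shows this term is also of order $\rme^{\tilde\nu_{-1}r}$ times a constant proportional to $\rme^{\eta R_2}$ — but since $\eta=\eta_*<0$, this prefactor can be made small by choosing $R_2$ large (this is exactly why $R_2$ is kept as a free $R$-independent parameter). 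Meanwhile the $(\tilde\nu_{-1}-\beta)$-remainder terms are dominated by $\rme^{(\tilde\nu_{-1}-\beta)(r-R_1)}$, which is smaller than $\rme^{(\tilde\nu_{-1}-\epsilon)r}$ once $\epsilon<\beta$. So after fixing $d>0$ so that the $\rho$-integral contribution is bounded below by a definite positive constant, and then fixing $R_2$ (hence $R_3$) large enough that the $\mathbf{a}^\mathrm{s}$-term cannot cancel more than half of it, the triangle inequality yields $|\mathbf{u}(r)|_{X_r}\geq c_1\rme^{(\tilde\nu_{-1}-\epsilon)r}$ on $[R_3,R-3\kappa^{-1}\log R]$. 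Note $\tilde\nu_{-1}-\epsilon=\nu_{-1}+\eta_*-\epsilon$; writing $\Re\tilde\nu_{-1}-\epsilon$ in place of $\tilde\nu_{-1}-\epsilon$ where real exponents are meant, and the constraint $\epsilon<\nu_{-1}$ (really $\Re\nu_{-1}$) guarantees this exponent stays positive-ish relative to $\eta_*$; the precise bookkeeping of real parts is routine.

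The main obstacle I anticipate is the \emph{non-degeneracy of the forcing along the critical mode}: one must verify that $\langle\mathbf{u}^\mathrm{ad}_{-1},(0,D^{-1}v_{-1})^*\rangle\neq0$, i.e. that projecting the specially chosen right-hand side onto the one-dimensional center direction does not accidentally annihilate it. The choice $h=\chi v_{-1}$ is clearly engineered for this, and $v_{-1}\neq0$ plus the structure of the adjoint eigenvector (its second component is generically nonzero because $\mathcal{A}_\mathrm{wt}$ has the companion-type block structure with an invertible $(2,1)$ block, so the adjoint kernel cannot be concentrated in the first slot) should close the gap — but this is the spot where one genuinely needs the detailed description of $\mathbf{u}_{-1}$ and $\mathbf{u}^\mathrm{ad}_{-1}$ from \cite[\S4.3]{ss-mem}. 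A secondary technical point is ensuring the asymptotic expansion \eqref{e:rg2} applies on the whole matching range and that the errors from $\|P^\mathrm{c}_R(r)-P^\mathrm{c}_\mathrm{wt}\|\leq C_0(\tfrac1r+\tfrac1{R^\gamma})$ are absorbed into the $\rme^{-\epsilon r}$ slack; since $R_3$ can be taken large this is harmless but needs to be stated carefully so the constants $c_1,d,R_3$ come out uniform in $R$ (and, as remarked in the text, uniform in nearby $(\lambda,\eta)$).
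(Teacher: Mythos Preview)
Your approach is essentially the paper's: use \eqref{e:rg4} to reduce to the stable contribution, isolate the center direction via Lemma~\ref{l:rg1}, verify the non-degeneracy $P^\mathrm{c}_\mathrm{wt}\mathbf{g}_0\neq0$ from \cite[\S4.3]{ss-mem}, and choose $d$ small then $R_2$ large. The non-degeneracy point you flag as the main obstacle is exactly what the paper invokes, and your reasoning about the companion structure of $\mathcal{A}_\mathrm{wt}$ is the right intuition.

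There is, however, one genuine confusion in your second step. You claim the term $\Phi_R^\mathrm{s}(r,R_1)\mathbf{a}^\mathrm{s}$ is suppressed relative to the integral term because its coefficient carries the factor $\rme^{\eta R_2}$ with $\eta<0$. This cannot work as stated: the forcing is $\mathbf{g}(\rho)=\rme^{\eta\rho}(0,D^{-1}v_{-1})^*$, so the integral contribution \emph{also} carries a factor $\rme^{\eta R_2}$ (it is supported near $\rho=R_2$). The factor $\rme^{\eta R_2}$ is common to both terms and therefore cannot make one dominate the other. The correct mechanism --- and this is what the paper does --- is to factor $\Phi_R^\mathrm{s}(r,R_1)\mathbf{a}^\mathrm{s}=\Phi_R^\mathrm{s}(r,R_2)\Phi_R^\mathrm{s}(R_2,R_1)\mathbf{a}^\mathrm{s}$ and use the dichotomy decay $\|\Phi_R^\mathrm{s}(R_2,R_1)\|\leq K\rme^{-\alpha(R_2-R_1)}$; combined with \eqref{e:rg5} this gives a contribution to $\mathbf{g}_1:=\rme^{-\eta R_2}[\,d\mathbf{g}_0(1+\rmO(d))+\Phi_R^\mathrm{s}(R_2,R_1)\mathbf{a}^\mathrm{s}\,]$ of size $\rmO(\sqrt{R_2 d}\,\rme^{-\alpha(R_2-R_1)})$, which is what actually becomes small as $R_2\to\infty$. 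Once you make this substitution, the rest of your argument (and your order of choosing $d$, then $R_2$, then $R_3$) goes through and matches the paper.
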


\begin{proof}
We focus on \eqref{e:rg4} and define $\mathbf{g}_0:=(0,D^{-1}v_{-1})$. For $r\geq R_2$, equation \eqref{e:rg3} and regularity of $\mathbf{g}_0$ show that
\begin{align*}
\int_{R_1}^r \Phi_R^\mathrm{s}(r,\rho) \mathbf{g}(\rho)\,\rmd\rho
& = \Phi_R^\mathrm{s}(r,R_2) \int_{R_2-d}^{R_2} \Phi_R^\mathrm{s}(R_2,\rho) \rme^{\eta\rho} \mathbf{g}_0\,\rmd\rho \\
& = \Phi_R^\mathrm{s}(r,R_2) \mathbf{g}_0 d (1+\rmO(d)) \rme^{\eta R_2},
\end{align*}
where the $\rmO(d)$ term is bounded uniformly in $R_2$. Hence, for $r\geq R_2$, we have
\begin{align*}
\Phi_R^\mathrm{s}(r,R_1) \mathbf{a}^\mathrm{s} + \int_{R_1}^r \Phi_R^\mathrm{s}(r,\rho) \mathbf{g}(\rho)\,\rmd\rho
 & = \Phi_R^\mathrm{s}(r,R_2) \left[ \mathbf{g}_0 d (1+\rmO(d)) \rme^{\eta R_2} + \Phi_R^\mathrm{s}(R_2,R_1) \mathbf{a}^\mathrm{s} \right] \\ 
 & =: \rme^{\eta R_2} \Phi_R^\mathrm{s}(r,R_2) \mathbf{g}_1 
\end{align*}
and \eqref{e:rg5} shows that 
\[
\left|\mathbf{g}_1 - \mathbf{g}_0 d \right| \leq C_1 \left(d^2 + \sqrt{\frac{R_2}{d}} \rme^{-\alpha(R_2-R_1)}\right) |\mathbf{g}_0|,
\]
where $C_1$ does not depend on $R_2$ and $d$. Using Lemma~\ref{l:rg1}, we conclude that
\begin{equation}\label{e:rg7}
\left| \Phi_R^\mathrm{s}(r,R_2) \mathbf{g}_1 -
\left(\frac{r}{R_2}\right)^{b_0} \rme^{\tilde{\nu}_{-1}(r-R_2)} \rme^{a(r,R_2)} P^\mathrm{c}_R(R_2) \mathbf{g}_1 \right|_{X_r} \leq C_0 \rme^{(\tilde{\nu}_{-1}-\beta)(r-R_2)} |\mathbf{g}_1|_{X_{R_2}}.
\end{equation}
Note that \cite[\S4.3]{ss-mem} and algebraic simplicity of the spatial eigenvalue $\nu_{-1}$ imply that
\[
|P^\mathrm{c}_R(R_2) \mathbf{g}_0|_{X_{R_2}} \geq |P^\mathrm{c}_\mathrm{wt} \mathbf{g}_0|_{X_{R_2}} - \frac{C_0}{R_2} |\mathbf{g}_0|_{X_{R_2}} \geq 1 - \frac{C_0|D^{-1}|}{R_2} \geq \frac{1}{2}
\]
for all sufficiently large $R_2$. Hence, we see that
\begin{align*}
|P^\mathrm{c}_R(R_2) \mathbf{g}_1|_{X_{R_2}}
& \geq |P^\mathrm{c}_R(R_2) d \mathbf{g}_0|_{X_{R_2}} - |P^\mathrm{c}_R(R_2) (\mathbf{g}_1-d\mathrm{g}_0)|_{X_{R_2}} \\
& \geq \frac{d}{2} - C_1 \left(d^2 + \sqrt{\frac{R_2}{d}} \rme^{-\alpha(R_2-R_1)}\right) |D^{-1}| |v_{-1}|_{L^2{S^1}}
\geq \frac{d}{4}
\end{align*}
after first choosing $d$ small enough and then $R_2$ large enough. Using these estimates together with \eqref{e:rg7}, we see that \eqref{e:rg4} becomes
\begin{align*}
|\mathbf{u}(r)|_{X_r} & \geq c_0 \left| \Phi_R^\mathrm{s}(r,R_1) \mathbf{a}^\mathrm{s} + \int_{R_1}^r \Phi_R^\mathrm{s}(r,\rho) \mathbf{g}(\rho)\,\rmd\rho \right|_{X_r}
= c_0 \rme^{\eta R_2} \left| \Phi_R^\mathrm{s}(r,R_2) \mathbf{g}_1 \right|_{X_r} \\
& \geq \frac{c_0 d}{4} \rme^{\eta R_2} \left(\frac{r}{R_2}\right)^{b_0} \rme^{\tilde{\nu}_{-1}(r-R_2)} \rme^{a(r,R_2)}
- c_0 C_0 |d| \rme^{\eta R_2} \rme^{(\tilde{\nu}_{-1}-\beta)(r-R_2)}.
\end{align*}
Choose $\epsilon>0$ so small that $\epsilon<\beta$ and $\nu_{-1}-\epsilon>0$. Since $c_0,d>0$ and $|a(r,R_2)|\leq a_0$ uniformly in $r$, we see that there are constants $c_1=c_1(R_2,d)>0$ and $R_3\geq R_2$ so that
$|\mathbf{u}(r)|_{X_r} \geq c_1 \rme^{(\tilde{\nu}_{-1}-\epsilon)r}$
uniformly in $r\in[R_3,R-3\kappa^{-1}\log R]$, which completes the proof of the lemma.
\end{proof}

Finally, $\mathbf{v}(r)=\rme^{-\eta r}\mathbf{u}(r)$ is the corresponding solution in the unweighted space. We have
$|\mathbf{v}(r)|_{X_r} = \rme^{-\eta r} |\mathbf{u}(r)|_{X_r} \geq c_1 \rme^{(\tilde{\nu}_{-1}-\eta-\epsilon)r} = c_1 \rme^{(\nu_{-1}-\epsilon)r}$
and by construction we have $\nu_{-1}-\epsilon>0$, which guarantees exponential growth of $\mathbf{v}(r)$ in $r$ on the interval $[R_3,R-3\kappa^{-1}\log R]$ and therefore for its first component $w(r)$ which satisfies \eqref{e:rg0}.

This completes the proof of Theorem~\ref{p:rg} for the case $\Re\nu_{-1}(\lambda)>0$. The case where $\Re\nu_0(\lambda)<0$ can be treated similarly by focusing on the unstable directions in backward time, instead of the stable directions in forward time. We omit the details as they are similar to the case studied above.


\section{Algorithm and numerical validation}\label{s:num}
The resolvent bounds of Theorem~\ref{t:1} provide the basis for a numerical algorithm to accurately and efficiently compute the eigenvalues of a spiral wave posed on a bounded domain. In this section, we first describe the algorithmic framework and then apply it to the Barkley model. 

\subsection{Exponential weights as preconditioners}

We seek to numerically approximate the spectra of the operator $\mathcal{L}_R$ posted on the bounded disk $B_R(0)$. For the numerical computations, the Laplacian is defined in polar coordinates $(r,\psi)$ and the relevant operator is
$\mathcal{L}_R \mathbf{v} = D \Delta_{r,\psi}  \mathbf{v} + \omega \partial_{\psi} \mathbf{v} + f_\mathbf{u}\left(\mathbf{u}_*(r,\psi)\right) \mathbf{v}$, 
which acts on functions in $\{\mathbf{v} \in H^2(B_R(0))\colon \mathbf{v}_r(R,\cdot) = 0\}$.

Posing the operator $\mathcal{L}_R$ in the exponentially weighted space $L^2_{\eta}(B_R(0))$ is equivalent to seeking eigenfunctions of the form $\mathbf{v}(r,\psi) = \rme^{-\eta r} \mathbf{w}(r,\psi)$. Thus, we instead consider the linear operator
$\mathcal{L}^{\eta}_{R} \mathbf{w}:=\rme^{\eta r} \mathcal{L}^{\eta}_{R} \rme^{-\eta r} \mathbf{w}=\mathcal{L}_{R}\mathbf{w}+D[ \eta^2 - \frac{\eta}{r}- 2\eta \partial_r] \mathbf{w}$ on the space $\{\mathbf{w} \in H^2(B_R(0))\colon \mathbf{w}_r(R,\cdot) = \eta \mathbf{w}(R,\cdot)\}$. Note that the operator $\mathcal{L}_R^{\eta}$ becomes $\mathcal{L}_R$ for $\eta = 0$. Based on Theorem~\ref{t:1}, we choose the exponential weight $\eta(\lambda)$ in the interval $J_0(\lambda)$, which is determined by the spectrum of $A_\mathrm{wt}(\lambda)$.

We note that Theorem~\ref{p:rg}(ii) provides uniform bounds on the resolvent for each $\lambda\in\mathbb{C}$ for which $0\in J_0(\lambda)$, that is, informally, for all $\lambda$ to the right of $\Sigma_\mathrm{FB}$ in the unweighted space. Hence, iterative eigenvalue solvers should work as expected to identify eigenvalues in these regions. Thus, the use of the weighted operator $\mathcal{L}^{\eta}_R$ is particularly useful for $\lambda \in \mathbb{C}$ for which $0 \notin \overline{J_0(\lambda)}$.

\textbf{Numerical methods.} Computing the spectra of spiral waves involves first solving for the spiral-wave patterns and subsequently computing the eigenvalues of the linearized operator. The spiral wave $\mathbf{u}_*(r,\psi)$ and far-field periodic wave-train solutions are computed numerically via root-finding methods following established methods: we review these methods briefly and refer to \cite{BW,dodson2019} for additional details. All computations are done in MATLAB, and the code is available on GitHub \cite{code_dodson_goh_sandstede}. Periodic wave trains are computed on a one-dimensional $2\pi$-periodic domain using a pseudospectral method with 128 grid points. For the spiral-wave computations, the bounded disk domain becomes a rectangle in polar coordinates, which we discretize with $N_r$ radial grid points and $N_{\theta}$ angular grid points. Derivatives are approximated using fourth-order centered finite differences in the radial direction and Fourier differentiation matrices in the angular coordinate. The radial grid spacing is fixed at $h_r = 0.05$ with $N_r = R/h_r+1$ radial grid points.

For the eigenvalue computations, the linear operator $\mathcal{L}_R^{\eta}$ is formed using differentiation matrices on a grid with a single grid point at the origin. At the origin, $\partial_r\mathbf{w}= 0$ and the Laplacian is computed with a five-point stencil. Boundary conditions applied on the outer radius are enforced using second-order centered finite-difference schemes coupled with the ghost point method. Numerically approximating eigenvalues of $\mathcal{L}^{\eta}_R$ is equivalent to finding the eigenvalues of a sparse square matrix with dimension $\left[N_{\theta} (N_r -1) + 1 \right]$ for each component of the equation. Unless stated otherwise, the 400 eigenvalues with the smallest absolute value are computed using the sparse eigenvalue solver $\texttt{eigs}$ with the `smallestabs' option.

The absolute spectrum and Fredholm boundaries are computed using the asymptotic periodic wave trains via the continuation algorithms described in \cite{ssr}. The $\epsilon$-pseudospectrum of $\mathcal{L}_R^{\eta}$ is found via the minimum singular value of the shifted operator $\mathcal{L}_R^{\eta} - \lambda$ for a grid of $\lambda \in \mathbb{C}$. Singular values were computed with the \texttt{svds} function. Condition numbers of the same shifted operator $\mathcal{L}_R^{\eta} - \lambda$ are computed using the \texttt{condest} function. Spatial eigenvalues $\nu(\lambda)$ are approximated numerically by computing eigenvalues of the operator $A_\mathrm{wt}(\lambda)$ defined in (\ref{e:awt}), where  derivatives were approximated via a Fourier spectral method with 128 grid points.

\begin{figure}
\centering
\includegraphics[width=\linewidth]{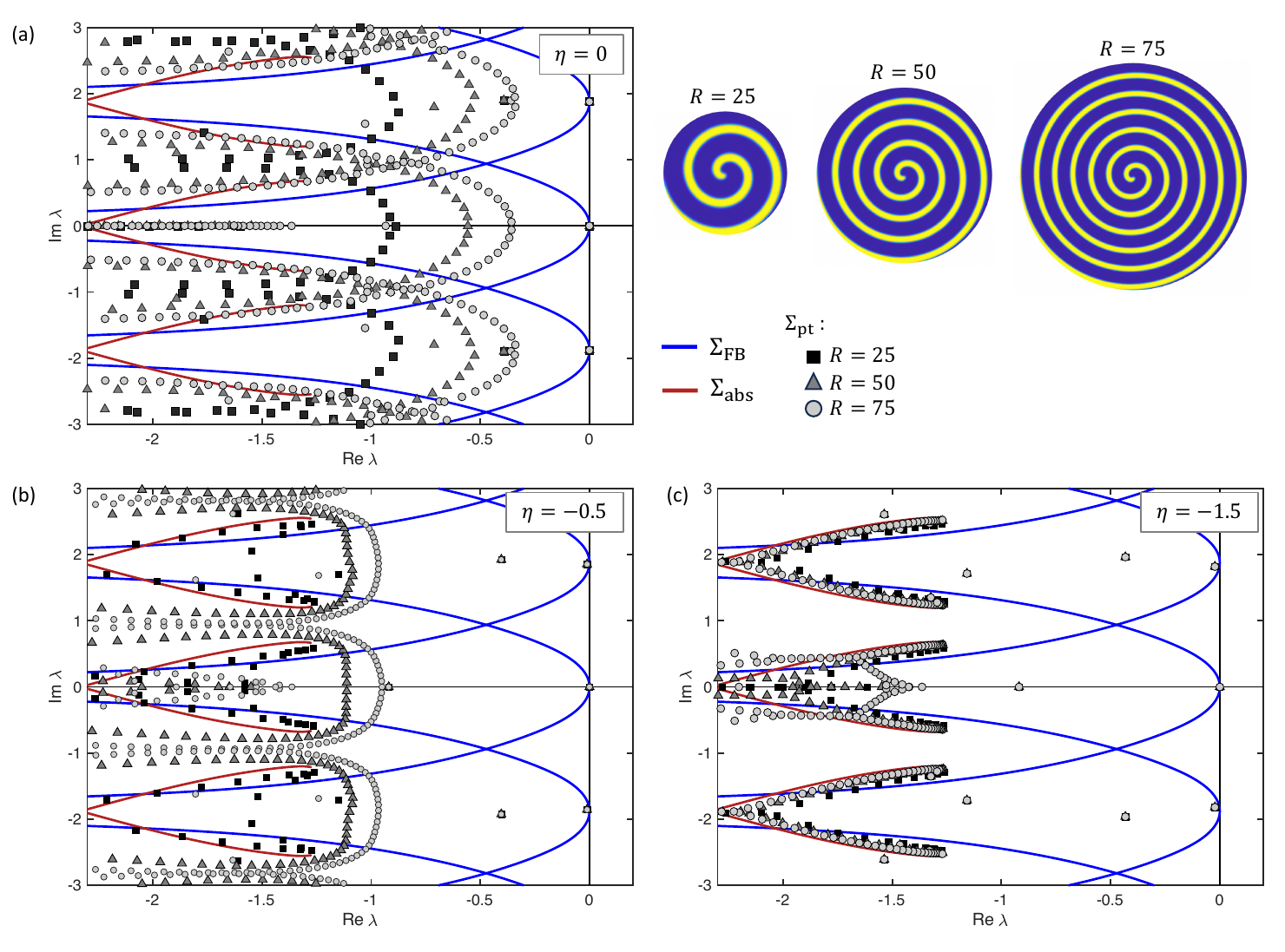}
\caption{(a) Inaccurate computation of point spectra of $\mathcal{L}_R$. Eigenvalues of spiral waves show divergence from $\Sigma_{\text{abs}}$ for increasing $R$ rather than the expected convergence. The spiral profiles capture the $u$-component of the Barkley model. (b)\&(c) Eigenvalues approach the anticipated limit points upon appropriate selection of the exponential weight.}
\label{fig:spec_exp_weight}
\end{figure}

\begin{figure}
\centering
\includegraphics[width=\linewidth]{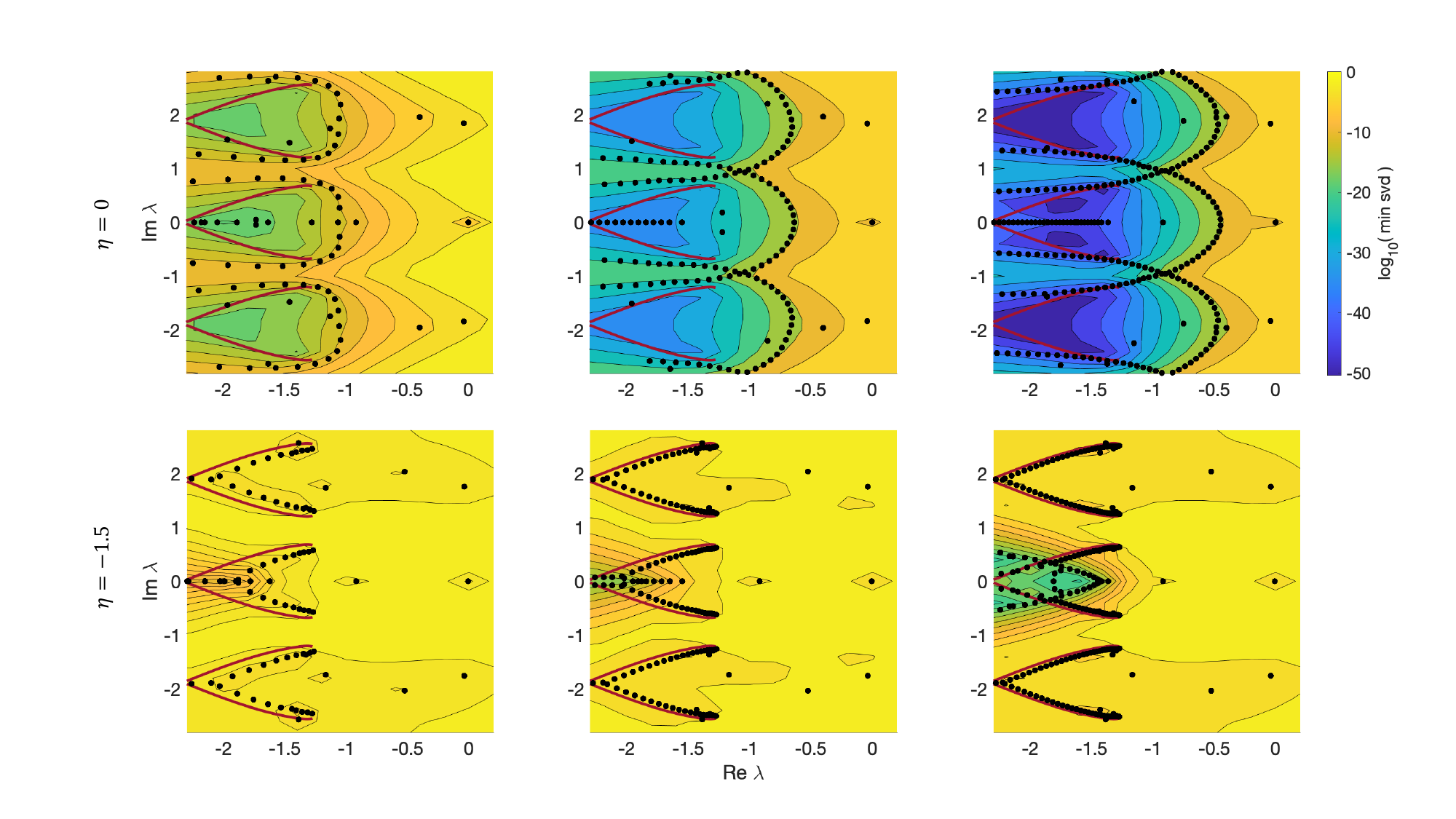}
\caption{Comparison of $\epsilon$-pseudospectra and eigenvalues of the operator $\mathcal{L}_R$ (top row) and  $\mathcal{L}_R^{\eta}$ (bottom row) for $\eta = -1.5$. The three columns correspond, from left to right, to disks of radius $R = 25, 50, 75$. Red curves show $\Sigma_\mathrm{abs}$.}
\label{fig:spiral_pseudospec}
\end{figure}

\begin{figure}
\centering
\includegraphics[width=0.9\linewidth]{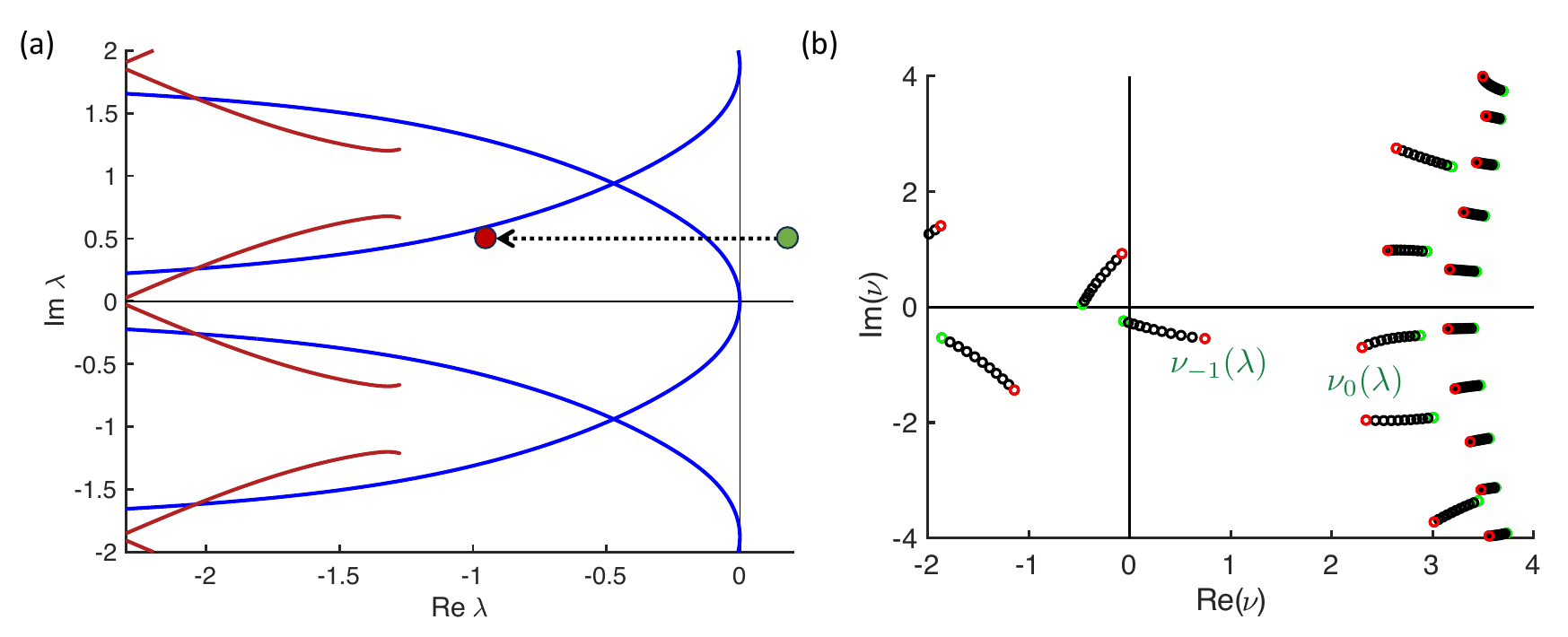}
\caption{Leading spatial eigenvalues $\nu_j(\lambda)$ shown in (b) as $\lambda$ moves along the path indicated by the horizontal dashed arrow in (a). Green and red markers in (b) indicate $\nu_j(\lambda)$ for $\lambda = 0.1 + 0.5 \rmi$ (green) and $\lambda = -1 + 0.5 \rmi$ (red). Spatial eigenvalues $\nu_{-1}(\lambda)$ and $\nu_0(\lambda)$ relevant for $J_0(\lambda)$ are labeled.  }
\label{fig:spatial_eigenvalue_selection}
\end{figure}

\subsection{Application: Barkley model}

The paradigm model
\[\textstyle
u_t = \Delta u + \frac{1}{\epsilon} u (1-u) \left( u - \frac{v+b}{a} \right), \qquad
v_t = \delta \Delta v + u - v
\]
exhibits bifurcations caused by destabilizing spiral-wave spectra \cite{BKT}.

Figures~\ref{fig:spec_exp_weight}a-\ref{fig:spiral_pseudospec} demonstrate that spectral computations for the operator $\mathcal{L}_R$ on the unweighted space yield inaccurate results. As the domain radius increases, eigenvalues in Figure~\ref{fig:spec_exp_weight}a move away from the theoretical absolute-spectrum limit and instead approach curves that resemble the Fredholm boundaries. These inaccurate eigenvalue results arise due to the exponential growth of the resolvent in $R$ over large regions of the complex plane to the left of $\Sigma_{\text{FB}}$, and iterative eigenvalue solvers such as \texttt{eigs} will identify many spurious eigenvalues in these regions. This fact is demonstrated in the top row of Figure~\ref{fig:spiral_pseudospec}, where we observe that the computed eigenvalues align along $\epsilon$-pseudospectrum contours that gradually approach $\Sigma_{\text{FB}}$ as $R$ increases.

Next, consider the operator $\mathcal{L}_R^{\eta}$ in exponentially weighted spaces. The bottom row of Figure~\ref{fig:spiral_pseudospec} contains the $\epsilon$-pseudospectra contours and the computed eigenvalues of the operator $\mathcal{L}_R^{\eta}$ for $\eta=-1.5$: note that the resolvent is better conditioned and eigenvalues are significantly more accurate, and that the only change from the top to bottom rows in Figure~\ref{fig:spiral_pseudospec} is the switch from $\mathcal{L}_R$ to the preconditioned operator $\mathcal{L}_R^{\eta}$.

The selection of exponential weight $\eta$ impacts the eigenvalue accuracy, as displayed in Figure~\ref{fig:spec_exp_weight}. As the exponential weight decreases to $\eta = -1.5$, more eigenvalues of $\mathcal{L}^{\eta}_R$ move closer to the theoretical $R\gg1$ limit $\Sigma_{\text{abs}}$. The choice of $\eta = -1.5$ comes from considering the spatial eigenvalues $\nu_j(\lambda)$. Figure~\ref{fig:spatial_eigenvalue_selection} displays the spatial eigenvalues $\nu_j(\lambda)$ as $\lambda$ moves from $\lambda_1 = 0.1 + 0.5 \rmi$ to $\lambda_2 = -1+ 0.5 \rmi$, that is as $\lambda$ traces out the horizontal path indicated by the dashed line between the green ($\lambda_1$) and red ($\lambda_2)$ markers. As $\lambda$ passes through the $\Sigma_{\text{FB}}$ branch, $\nu_{-1}(\lambda)$ crosses the imaginary axis into the positive half-plane. Theorem~\ref{t:1} suggests weights $\eta \in J_0(\lambda) = \left( - \nu_0(\lambda), -\nu_{-1}(\lambda)\right)$. Thus, for this particular parameter setting in the Barkley model, an exponential weight of $\eta= -1.5$ is a good selection for a large range of $\lambda$ to the left of $\Sigma_{\text{FB}}$. 

\begin{table}
\caption{Shown are the condition numbers $\kappa$ and minimum SVD values for the operator $\mathcal{L}^{\eta}_R - \lambda$ with weight $\eta$ and indicated value of $\lambda$. All values are reported on a $\log_{10}$-scale for radius $R = 75$.}
\begin{center}
\begin{tabular}{|c| c c | c c | c c| }
\toprule
 & \multicolumn{2}{c|} {$\lambda = 0$}  &   \multicolumn{2}{c|}{ $\lambda= -1 + \rmi$ } & \multicolumn{2}{c|}{ $\lambda = -1.5 + \rmi$}\\
 $\eta$ & $\kappa$  & min svd & $\kappa$  & min svd& $\kappa$  & min svd\\
\midrule
0 & 16.4360& -6.4086& 22.2498 & -15.8022& 33.5090 &-27.0790  \\
-0.5 & 13.0844 &-3.6099  & 8.2964 & -1.9682 & 18.1444& -11.8621 \\
-1.0 & 13.5074& -4.2008& 7.5628 & -1.0418& 8.4466& -1.8574 \\
-1.5 & 12.5221&-3.2554 & 8.1392 & -1.3671& 10.4340& -2.3428 \\
-2.0 & 12.2852&-3.1592 & 9.5971 & -2.5870 & 22.7333 & NaN \\
\bottomrule
\end{tabular}
\end{center}
\label{table:condition_numbers}
\end{table}

\begin{figure}
\centering
\includegraphics[width=\linewidth]{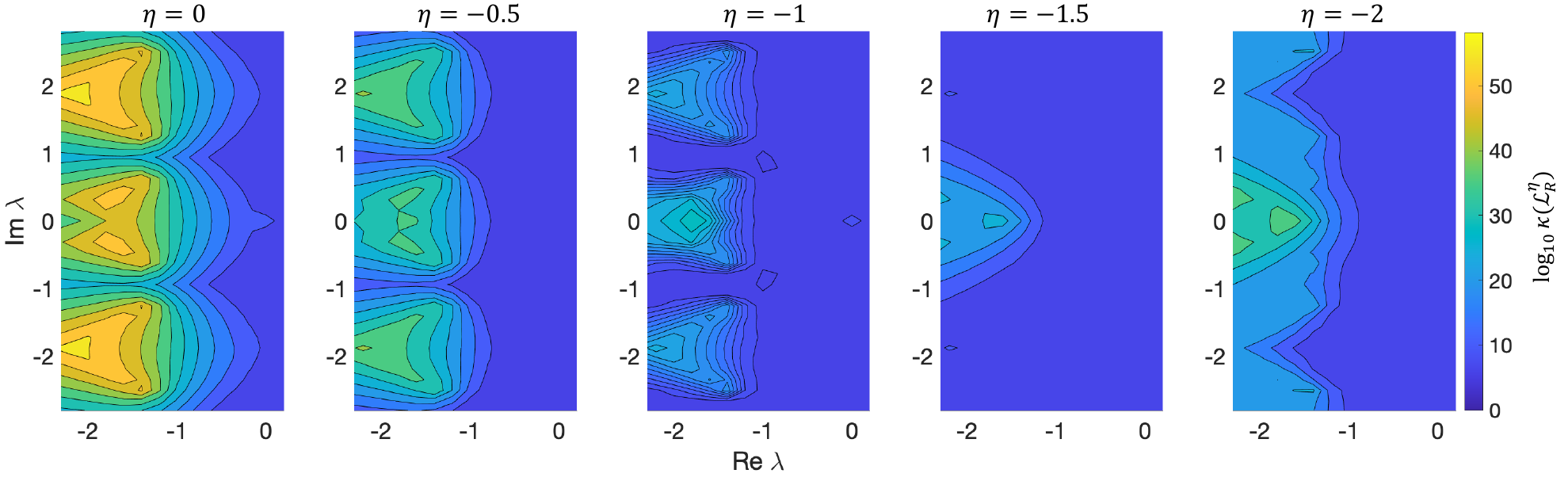}
\caption{Shown is a color plot of the condition numbers of $\mathcal{L}_R^\eta-\lambda$ in a $\log_{10}$-scale in the $\lambda$-plane for radius $R=75$ and various $\eta$.}
\label{fig:condition_number}
\end{figure}

The condition numbers $\kappa$ and the minimum SVD values of the operator $\mathcal{L}^{\eta}_R-\lambda$ of the numerical operator shown in Table~\ref{table:condition_numbers} and illustrated in Figure~\ref{fig:condition_number} demonstrate similar improvement with the addition of the exponential weight. The three selected values for $\lambda$ in Table~\ref{table:condition_numbers} represent points at various distances from $\Sigma_\mathrm{abs}$ and $\Sigma_\mathrm{FB}$. While exponential weights yield only moderate improvements of the condition number for $\lambda$ near the origin (due to the eigenvalue  $0\in\Sigma_\mathrm{ext}^\mathrm{sp}$), appropriate exponential weights improve the condition number by over 25 orders of magnitude for $\lambda$ near $\Sigma_\mathrm{abs}$. Table~\ref{table:condition_numbers} and Figure~\ref{fig:condition_number} also indicate the reduction in efficiency if the weight value is selected outside of $J_0(\lambda)$; weights of $\eta= -2$ result in higher condition numbers than $\eta = -1$ for some $\lambda$.


\bibliography{Spiral_Waves_Spectra_Numerics_SIAM}
\bibliographystyle{siamplain}

\end{document}